\documentclass[journal]{journal}

\usepackage{amsmath}
\usepackage{amssymb}
\usepackage{amsthm}
\usepackage{authblk}

\usepackage{graphicx}
\usepackage[style=numeric,backend=bibtex8,style=ieee,sorting=none]{biblatex}
\usepackage{caption}
\addbibresource{bib.bib}
\frenchspacing
\setlength{\pdfpagewidth}{8.5in}
\setlength{\pdfpageheight}{11in}

\pdfinfo{
/TemplateVersion (2024.1)
}%
\usepackage{algorithm}
\usepackage{algorithmic}

\usepackage{newfloat}
\usepackage{listings}
\DeclareCaptionStyle{ruled}{labelfont=normalfont,labelsep=colon,strut=off} 
\lstset{%
basicstyle={\footnotesize\ttfamily},
numbers=left,numberstyle=\footnotesize,xleftmargin=2em,
aboveskip=0pt,belowskip=0pt,%
showstringspaces=false,tabsize=2,breaklines=true}
\floatstyle{ruled}
\newfloat{listing}{tb}{lst}{}
\floatname{listing}{Listing}

\setcounter{secnumdepth}{1}

\usepackage[ruled,algo2e]{algorithm2e}
\usepackage{amsmath}
\usepackage{amssymb}
\usepackage{amsthm}
\usepackage{breqn}
\usepackage{enumerate}
\usepackage{enumitem}
\usepackage[T1]{fontenc}
\usepackage[hang,flushmargin]{footmisc}
\usepackage{multicol}
\usepackage{rotating}
\usepackage{tabularx}
\usepackage{tikz}
\usepackage{thmtools}
\usepackage{thm-restate}
\usepackage{todonotes}
\usepackage{xcolor}
\usepackage{xfrac}

\usepackage[capitalize]{cleveref} %

\newtheorem{Assumption}{Assumption}
\newtheorem{Corollary}{Corollary}

\newtheorem{Lemma}{Lemma}
\newtheorem{Proposition}{Proposition}
\newtheorem{Remark}{Remark}

\Crefname{Assumption}{Assumption}{Assumptions}
\theoremstyle{definition}
\theoremstyle{remark}

\definecolor{OliveGreen}{rgb}{0,.35,0}
\definecolor{RoyalBlue}{cmyk}{1, 0.50, 0, 0.25}

\tolerance=1000

\SetKwInOut{Input}{Input}
\SetKwInOut{Output}{Output}
\author[1]{ Bastien Batardière}
\author[1]{Joon Kwon}

\affil[1]{Université Paris-Saclay, AgroParisTech, INRAE, UMR MIA Paris-Saclay\newline
bastien.batardiere@inrae.fr, joon.kwon@inrae.fr}


\DeclareMathOperator*{\argmin}{arg\,min}

\newcommand{\anchor}{y^{(t-1)}}
\newcommand{\barre}{\overline}
\newcommand{\bin}[1]{}

\newcommand{\diag}{\operatorname{diag}}
\newcommand{\dln}{\left\|}
\newcommand{\drn}{\right\| }
\newcommand{\esp}[1]{\mathbb E \left[ #1 \right]}
\newcommand{\espt}[1]{\mathbb E_t \left[ #1 \right]}

\newcommand{\esptnd}[1]{\mathbb E_t \left[ \dln #1 \drn ^2  \right]}
\newcommand{\gt}{g^{(t)}}
\newcommand{\gtop}{g^{(t)\top}}

\newcommand{\mE}{\mathbb E }

\newcommand{\mR}{\mathbb R }
\newcommand{\meann}{\frac 1 n \sumunn}
\newcommand{\meanT}{\frac 1 T \sumunT}
\newcommand{\mt}{m^{(t)}}
\newcommand{\mtm}{m^{(t-1)}}
\newcommand{\PI}[1]{\Pi_{X, A_t}\left(#1\right)}

\newcommand{\ps}[2]{\left\langle #1, #2 \right\rangle}
\newcommand{\sumunn}{\sum_{i= 1}^n}
\newcommand{\sumunT}{\sum_{t= 1}^T}
\newcommand{\tildexti}{\tilde x^{(t)}_i}

\newcommand{\tildextmi}{\tilde x^{(t-1)}_i}
\newcommand{\tildextmri}{\tilde x^{(t-1)}_{i(t)}}
\newcommand{\Tr}{\operatorname{Tr}}
\newcommand{\xt}{x^{(t)}}

\newcommand{\xtp}{x^{(t+1)}}
\newcommand{\xstar}{x^{\star}}
\newcommand{\wt}{\tilde{x}^{(t)}}
\newcommand{\wtm}{\tilde{x}^{(t-1)}}

\renewcommand{\leq}{\leqslant}
\renewcommand{\geq}{\geqslant}

\ifCLASSINFOpdf
\else
\fi
\hyphenation{op-tical net-works semi-conduc-tor}

\pagestyle{empty}

\begin{document}
\title{Finite-Sum Optimization: Adaptivity to Smoothness and
  Loopless Variance Reduction}

\maketitle
\thispagestyle{empty}

\begin{abstract}
  For finite-sum optimization, variance-reduced gradient methods (VR)
  compute at each iteration the gradient of a single function (or of a
  mini-batch), and yet achieve faster convergence than SGD thanks to a
  carefully crafted lower-variance stochastic gradient estimator that
  reuses past gradients. Another important line of research of the
  past decade in continuous optimization is the adaptive algorithms
  such as AdaGrad, that dynamically adjust the (possibly
  coordinate-wise) learning rate to past gradients and thereby adapt to
  the geometry of the objective function. Variants such as RMSprop and
  Adam demonstrate outstanding practical performance that have
  contributed to the success of deep learning. In this work, we
  present AdaLVR, which combines the AdaGrad algorithm with
  \emph{loopless} variance-reduced gradient estimators such as SAGA or L-SVRG
  that benefits from a straightforward construction and a streamlined analysis. We
  assess that AdaLVR inherits both good convergence properties from VR
  methods and the adaptive nature of AdaGrad: in the case of
  $L$-smooth convex functions we establish a gradient complexity of
  $O(n+(L+\sqrt{nL})/\varepsilon)$ without prior knowledge of $L$. Numerical
  experiments demonstrate the superiority of AdaLVR over
  state-of-the-art methods. Moreover, we empirically show that the
  RMSprop and Adam algorithm combined with variance-reduced gradients
  estimators achieve even faster convergence.
\end{abstract}
\begin{IEEEkeywords}
Convex optimization, variance reduction, adaptive algorithms, loopless
\end{IEEEkeywords}

\IEEEpeerreviewmaketitle
\section{Introduction}
We consider the finite-sum optimization problem where we aim at
minimizing a function $f:\mathbb{R}^d\to \mathbb{R}$ with a finite-sum
structure: \[ f(x)=\frac{1}{n}\sum_{i=1}^nf_i(x),\quad x\in
\mathbb{R}^d, \] where each function $f_i:\mathbb{R}^d\to \mathbb{R}$
is differentiable. This formulation is ubiquitous in machine
learning (empirical risk minimization, such as least-squares or
logistic regression with e.g. linear predictors or neural networks)
and statistics (inference by maximum likelihood, variational
inference, generalized method of moments, etc.). In such applications,
it is nowadays commonplace to deal with very large dimension ($d\gg
1$) and datasets ($n\gg 1$), and gradient methods (iterative
algorithms which access the objective function by computing values and
gradients at query points) offer the best scalability.

The most basic gradient method is the Gradient Descent \parencite[GD, ][]{cauchy1847methode} which
for instance achieves a convergence rate of $1/T$ (where $T$ is the number of
iterations) in the case of a smooth objective function (meaning the
gradient is Lipschitz-continuous). However, it requires the
exact computation of a gradient of $f$ at each iteration, corresponding
to the computation of the gradients of each individual
function $f_i$ ($1\leq i\leq n$), which is costly
when $n$ is large. In such a finite-sum optimization context, the
algorithm is also called \emph{Batch} GD.
An efficient solution for improving the scalability in
$n$ is the Stochastic Gradient Descent \parencite[SGD, ][]{robbins1951stochastic} which replaces
the exact gradient computation by a noisy one: it draws one or a
small subset of indices in $\left\{ 1,\dots,n \right\}$ and computes the
average of the corresponding gradients, which is called the
stochastic gradient estimate. This much cheaper iteration
cost has a drawback: the convergence rate deteriorates into $\sqrt{T}$.
When the objective function is also strongly convex, Batch GD achieves
a linear convergence, whereas SGD guarantees a $1/T$ rate.

About a decade ago, the pioneering work of \cite{roux2012stochastic}, followed by \textcite{schmidt2017minimizing},
introduced the SAG algorithm which combined the best of both worlds by computing, at each
iteration, the gradient of a single function $f_i$ only, while
nevertheless retaining the fast $1/T$ convergence rate in the case of
smooth functions (instead of $1/\sqrt{T}$). The SAG algorithm
introduces a sophisticated gradient estimate, which reuses previously
computed gradients to implement the control variates methods~\parencite{blatt2007convergent} and reduce its variance. As the iterates converge towards
a solution, the variance of the estimates vanishes, thus recovering the
$1/T$ convergence rate from Batch GD. When the objective function
is strongly convex, SAG recovers a linear convergence rate.

Numerous alternative algorithms implementing similar ideas quickly
followed, e.g. SVRG~\parencite{johnson2013accelerating}, SAGA
\parencite{SAGA_article}, SVRG++
\parencite{allen2016improved}, L-SVRG~\parencite{hofmann2015variance},
JacSketch~\parencite{gower2021stochastic},
MISO~(\parencite{defazio2014finito,mairal2013optimization}), S2GD
(\parencite{konevcny2017semi}), L2S~\parencite{li2020convergence}, etc. Other variance-reduced algorithms computing the convex conjugates of
the functions $f_i$ (instead of gradients) were also developped: SDCA ~\parencite{shalev2013stochastic} and its extensions (e.g.\
\cite{lin2014accelerated,shalev2014accelerated}) use a coordinate
descent algorithm to solve the dual problem, when the primal problem
takes the form of a $\ell_2$-regularized empirical risk minimization.

We focus on the case where each function $f_i$ is assumed to be convex
and $L$-smooth (meaning the gradient is $L$-Lipschitz continuous,
which includes important problems such as least-squares linear
regression and logistic regression) and consider the gradient
complexity of the algorithms to attain an $\varepsilon$-approximate solution.

The basic algorithms SAG, SAGA, SVRG
and L-SVRG enjoy a gradient complexity of order
$O((n+L)/\varepsilon)$, which can be improved to
$O(n+L\sqrt{n}/\varepsilon)$~\parencite{reddi2016stochastic},
and the SVRG++ algorithm achieves $O(n\log
(1/\varepsilon)+L/\varepsilon)$~\parencite{allen2016improved}. None of those two complexities
dominate the other and are, to the best of our knowledge, the best
that does not involve Catalyst-based~\parencite{lin2015universal} or momentum-based acceleration~\parencite{allen2017katyusha}.

Combining various acceleration techniques with variance-reduced
gradient estimates is an important line of work, which managed to obtain the even more
suprising $1/T^2$ convergence rate while computing only $O(1)$
gradients by iteration~(\parencite{lan2018optimal,lan2019unified,joulani2020simpler,song2020variance,li2021anita}). In particular, the recent DAVIS
algorithm~\parencite{liu2022kill} achieves
the optimal $O(n+\sqrt{nL/\varepsilon})$ complexity, which matches the
lower bound from~\cite{woodworth2016tight}.

 In the smooth nonconvex setting, specifically designed variance reduced methods (e.g.\ SARAH~\parencite{nguyen2017sarah},
Natasha~\parencite{allen2017natasha}, SPIDER~\parencite{fang2018spider} and
STORM~\parencite{cutkosky2019momentum}) achieve better rates than the algorithms mentioned above.

An important issue with all the aforementioned algorithms is the need
to precisely tune the step-size as a function of the smoothness
coefficient \(L\) in order to achieve the convergence rates given by the
theory. If the chosen step-size is too small, the convergence may be
excruciatingly slow, and if it is too large, the iterates may very
well not converge at all. Unfortunately, prior knowledge of smoothness
coefficients can be computationally intractable when the dimension of
the problem is high (\parencite{latorre2020lipschitz}), except in some
special cases, such as least-squares linear regression or logistic
regression. Therefore, in most practical cases, tuning is performed by
running the algorithm multiple times with different values for the
step-sizes, which is tedious and resource-intensive. Therefore, the
search for algorithms that are \emph{adaptive} to an unknown smoothness
coefficient is an important challenge, and several kinds of algorithms
have been proposed.

In the context of batch optimization, line search methods try, at each
iteration, several values for the step-size until one satisfies a
given condition (e.g.\ the Armijo condition \parencite{armijo1966minimization}. This
involves computing the objective value at each corresponding candidate
iterate. Such methods provably are able to adapt to an unknown
smoothness coefficient and achieve optimal convergence rates for
convex problems (\parencite{nesterov2015universal}).

Still in batch optimization, bundle-level methods keep in memory a
subset of the past gradient information to create a model of the
objective function (\parencite{lemarechal1995new}). They
exhibit good performance in practice and they achieve optimality for smooth convex
optimization, adaptively to an unknown smoothness coefficient (\parencite{lan2015bundle}).

Other methods use the last two observed gradients to compute a simple
local estimate of the smoothness coefficient to be used in the
step-size. One popular example is the step-size proposed by \cite{barzilai1988two}: although it has good empirical performance, convergence
guarantee are only known for quadratic objectives.
A similar one is the ALS step-size (\parencite{armijo1966minimization,vrahatis2000class}), and its YK variant
\parencite{malitsky2020adaptive}. The latter adaptively achieves the same
rates as GD without prior knowledge of smoothness (and strongly
convexity) coefficients.

Although the above methods may enjoy good guarantees and performance,
an important drawback is the difficulty to extend them to the
stochastic and finite-sum optimization settings, because they rely on
exact gradient computations, although some attempts have been made---see
e.g. \cite{vaswani2019painless,dvinskikh2019adaptive} for
stochastic versions of the Armijo rule, and
\cite{tan2016barzilai,liu2019class,li2019adaptive,li2020almost,yang2021accelerating}
for the Barzilai-Borwein step-size combined with variance reduction in
finite-sum optimization.

An important family of algorithms, on which this work builds upon, is
the AdaGrad \parencite{mcmahan2010adaptive,Adagrad_article}
algorithm and its many variants, that can naturally handle finite-sum
and general stochastic first-order optimization. Coming from the
online learning literature, these algorithms compute step-sizes based on
an accumulation of all previously observed gradients, and famously
allow for coordinate-wise step-sizes. The latter have been highly
successful in deep learning optimization (with e.g. RMSprop, Adam \parencite{kingma2015adam},
etc.) where the magnitude of the partial derivatives strongly depends
on the depth of the corresponding weights in the neural network
structure. In the batch optimization context, they have demonstrated
various adaptive properties, to smoothness in particular: the AdaNGD
variant \parencite{levy2017online} was the first such algorithm to be proven adaptive to
smoothness. AdaGrad itself is established to be adaptive to smoothness
and noise in \cite{levy2018online} as well as accelerated
variants AcceleGrad \parencite{levy2018online} and UnixGrad \parencite{kavis2019unixgrad}. Many subsequent works established similar
adaptivity for other variants for the convex setting (\parencite{wu2018wngrad,cutkosky2019anytime,joulani2020simpler,ene2021adaptive,ene2022adaptive}), and for the nonconvex setting (\parencite{wu2018wngrad,ward2020adagrad,levy2021storm+,kavis2022high,faw2022power,attia2023sgd}).

\paragraph{Contributions}
\label{sec:contributions}
In the context of finite-sum optimization, we introduce AdaGrad-like
algorithms combined with SAGA and L-SVRG type gradient estimators. We
analyze the variants AdaGrad-Norm and AdaGrad-Diagonal with both
estimators in a \emph{unified fashion}, which is a key feature of this work.
Unlike \cite{liu2022adaptive}, we allow for diagonal scalings (i.e.
    per-coordinate step-sizes) which have the well-documented benefit of
    remedying
ill-conditioning (and which is demonstrated in our numerical experiments). In the case of convex functions, we
establish a $O(n+(L+\sqrt{nL})/\varepsilon)$ gradient complexity, surpassing \cite{dubois2022svrg} when $n$ is large.  Importantly,
these guarantees are adaptive to the smoothness coefficient $L$ of the
functions $f_i$, meaning that the algorithms need not be tuned with
prior knowledge of $L$. An important contribution of this work is the
    analysis of AdaGrad with \emph{loopless} gradient estimators (SAGA and L-SVRG), as
opposed to SVRG-type gradient estimators involving cumbersome inner and outer loops, resulting in a much more streamlined construction and analysis.
Numerical
experiments also consider as
heuristics the RMSprop and Adam algorithms combined with SAGA and
L-SVRG gradient estimators, and demonstrate the excellent performance of
all the proposed methods.

\paragraph{Related work}
\label{sec:related-work}

Several recent works proposed finite-sum optimization methods combining adaptivity to
smoothness and variance reduction. The closest works to ours are
\cite{dubois2022svrg} and \textcite{liu2022adaptive}. The former achieves adaptivity to smoothness by
restarting AdaGrad after each inner loop of the SVRG algorithm, and by
restarting the outer loop itself with doubling epoch lengths. This
construction achieves a \(O(n\log (L/\varepsilon)+L/\varepsilon)\) gradient
complexity in the smooth convex case (similar to SVRG++), which is
better than ours in the small \(\varepsilon\) regime and worse in the
large \(n\) regime. The latter work also combines AdaGrad with SVRG and additionally
incorporates acceleration and proximal
operators (for dealing with composite objectives) which yields the AdaVRAG
and AdaVRAE algorithms. They achieve better gradient complexities
\(O(n\log \log n+\sqrt{nL/\varepsilon})\) and \(O(n\log \log n+\sqrt{(nL\log
L)/\varepsilon})\) in the smooth convex case, with adaptivity to
smoothness; but the drawbacks are the limitation to algorithms
corresponding to AdaGrad-Norm (therefore, no diagonal scaling is
considered) and to SVRG-like variance reduction which involves inner
and outer loops. In comparison, our construction and analysis are much
more straightforward due to loopless variance reduction.

Further related works include \cite{li2022variance} which
combines AdaGrad with SVRG as a heuristic only, \cite{wang2022divergence}
which combines the Adam algorithm with SVRG,
and \cite{kavis2022adaptive} which
achieves adaptivity to smoothness by combining AdaGrad-Norm with a
SPIDER-type variance reduction, also in a nonconvex setting.

Beyond finite-sum optimization, some papers have proposed similar
approaches for general stochastic optimization:
\cite{cutkosky2018distributed} achieves adaptivity to smoothness by
combining SVRG with abstract adaptive
algorithms (including AdaGrad) in a convex setting,
whereas \cite{li2023convergence} combines Adam with a variance
reduction technique similar to STORM in a nonconvex setting.

Other kinds of adaptive step-sizes were also combined with variance
reduction: \cite{xie2022adaptive} considers BB, ALS and YK step-sizes
in conjunction with SAGA and achieves adaptivity to smoothness in the
smooth and strongly convex case; \cite{shi2021ai} proposes the
AI-SARAH algorithm that is also adaptive to smoothness in the strongly
convex case by however accessing the Hessian of individual
functions \(f_i\), which may be costly except for some simple cases;
and \cite{gower2016stochastic} which associates the celebrated
quasi-Newton BFGS algorithm with SVRG.

\section{Setting}
Let $n,d\geqslant 1$ be integers. In the following, $\dln \, \cdot \, \drn$ denotes
the canonical Euclidean norm, and for a positive semidefinite
matrix $A \in \mR^{d\times d}$, we denote the associated Mahalanobis norm as
$\dln x  \drn_A = \ps{x}{Ax}^{\sfrac 12} \;(x \in \mR^d)$. If $A$ is a
nonnegative scalar, we make an abuse of notation by denoting $\dln \cdot
\drn_A = \dln \cdot \drn_{AI_d}$.  We consider the minimization of functions with a finite-sum structure:
$$\min_{x \in X}f(x),\qquad f(x)= \meann f_i\left(x\right),$$
where $X$ is a compact convex subset of $\mathbb R^d$ with finite diameter $D \triangleq \sup_{x,y \in X}\dln x - y \drn < \infty $  and each function $f_i:\mathbb{R}^d\to \mathbb{R}, \, \left( 1 \leq i \leq n\right)$ satisfies the following assumptions.
\begin{Assumption}[Convexity and smoothness]
  \label{ass:convex-smooth}
  There exists $L>0$ such that for all $1\leqslant i\leqslant n$, $f_i$ is convex,
  differentiable, and $L$-smooth (i.e.\ $\nabla f_i$ is $L$-Lipschitz continuous).
\end{Assumption}
\begin{Assumption}[Existence of a minimum]\label{ass:minimum}
    There exists $\xstar \in X$ such that $$f\left(\xstar\right) = \min_{x\in \mathbb R^d  }f(x)= \min_{x\in X  }f(x) .$$
\end{Assumption}

\section{AdaGrad with loopless variance reduction}\label{sec:AdaLVR}
We introduce and analyze a family of algorithms that combines
AdaGrad (either the Norm or the Diagonal version) with SAGA and L-SVRG type
variance-reduced gradient estimators. For smooth convex
finite-sum optimization, we establish in Theorem~\ref{thm:AdaLVR} a
convergence rate of $O((L+\sqrt{nL})/T)$,
which corresponds to a gradient complexity of $O(n+(L+\sqrt{nL})/\varepsilon)$.
Importantly, this guarantee is achieved without prior knowledge of
the smoothness coefficient $L$.

\paragraph{AdaGrad} \label{subsec:AdaGrad} AdaGrad
\parencite{mcmahan2010adaptive,Adagrad_article} is a variant of (online)
gradient descent with a step-size that dynamically adapts to the past
observed gradients. Given a hyperparameter $\eta > 0$, an initial point
$x^{(1)} \in \mR^d$ and $\left( \gt \right)_{t \geq 1}$ a sequence in
$\mR^d$, the corresponding iterates are defined as
\begin{equation}
  \tag{AdaGrad}
\label{eq:adagrad}
x^{(t+1)} = \PI{\xt - \eta A_t^{-1} g^{(t)}}, \quad t \geq 1,
\end{equation}
where $\PI{z} \triangleq \argmin_{y \in X} \dln y-z \drn_{A_t}$ and where for $A_t$ we consider two possibilities:
\begin{itemize}
\item $A_t =  \left( \sum_{s=1}^t \dln g^{(s)}  \drn^2  \right) ^{\sfrac 1 2 } \in \mR,$\hfill (AdaGrad-Norm)
\item $A_t = \left(\sum_{s=1}^t \diag\left( g^{(s)} g^{(s)\top}\right) \right) ^{\sfrac 1 2 }\in \mR^{d\times d}$, \hfill  (AdaGrad-Diagonal)
\end{itemize}
where $\diag(A)$ denotes the matrix obtained by zeroing the
off-diagonal coefficients from $A$. In
the first case, $A_t$ is a scalar. In the second case, the algorithm
has per-coordinate step-sizes: $A_t$ is a
diagonal matrix of
size $d\times d$, the square root is to be understood component-wise and
$A_t^{-1}$ denotes the Moore--Penrose pseudo-inverse in cases where $A_t$ is
non-invertible. We also set $A_0=0$ by convention.

The following classical inequality will be used in Section~\ref{sec:comb-adagr-with} for the
analysis of AdaGrad combined with loopless variance reduction. Our
definition does not include a positive initial term $\varepsilon$ in the
definition of $A_t$, unlike most works on AdaGrad;
we therefore include the proof in the appendix for completeness.
\begin{restatable}[\cite{Adagrad_article}]{Lemma}{AdaGradLemma}\label{lem:convenientBoundAdaGrad}
The iterates defined in \eqref{eq:adagrad} satisfy
\[ \sumunT \ps{g^{(t)} }{\xt - \xstar} \leq \alpha\left(\eta + \frac {D^2} {2\eta} \right)\sqrt{\sumunT \dln \gt \drn^2}, \]
where $\alpha=1$ for AdaGrad-Norm (resp. $\alpha=\sqrt{d}$ for AdaGrad-Diagonal).
\end{restatable}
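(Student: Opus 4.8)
The plan is to follow the classical AdaGrad regret analysis, handling the Norm and Diagonal cases in parallel and keeping track of the factor $\alpha$ only at the end. First I would use that $\xtp$ is the $\dln\cdot\drn_{A_t}$-projection onto the convex set $X$ of the point $z^{(t)} \triangleq \xt - \eta A_t^{-1}\gt$: since $\xstar \in X$, this projection is nonexpansive in the $A_t$-norm, so $\dln \xtp - \xstar\drn_{A_t}^2 \leq \dln z^{(t)} - \xstar\drn_{A_t}^2$. Expanding the right-hand side and using $A_t A_t^{-1}\gt = \gt$ (which holds because $\gt$ lies in the range of $A_t$, so the Moore--Penrose pseudo-inverse causes no difficulty) yields the one-step bound
\[ \ps{\gt}{\xt - \xstar} \leq \frac 1 {2\eta}\left( \dln \xt - \xstar\drn_{A_t}^2 - \dln \xtp - \xstar\drn_{A_t}^2 \right) + \frac \eta 2 \ps{\gt}{A_t^{-1}\gt}. \]

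Summing over $t$ produces two pieces. The first is a telescoping-type sum in which the metric $A_t$ changes at each step, so I would reorganize it by summation by parts. Writing $d_t \triangleq \xt - \xstar$ and using $A_0 = 0$, one obtains $\sumunT (\dln d_t\drn_{A_t}^2 - \dln d_{t+1}\drn_{A_t}^2) \leq \sumunT \ps{d_t}{(A_t - A_{t-1})d_t}$, where the dropped boundary term $-\dln d_{T+1}\drn_{A_T}^2 \leq 0$. Each increment $A_t - A_{t-1}$ is positive semidefinite (the accumulators are nondecreasing in the Loewner order), so $\ps{d_t}{(A_t - A_{t-1})d_t}$ can be bounded using $\dln d_t\drn \leq D$. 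For AdaGrad-Norm this telescopes to $D^2 A_T = D^2\sqrt{\sumunT \dln\gt\drn^2}$; for AdaGrad-Diagonal it telescopes coordinatewise to $D^2\Tr(A_T) = D^2\sum_{j=1}^d (\sumunT (\gt_j)^2)^{1/2}$, which Cauchy--Schwarz converts into $\sqrt d\, D^2 \sqrt{\sumunT\dln\gt\drn^2}$.

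For the second piece I would invoke the standard scalar inequality $\sum_{t} a_t/(\sum_{s\leq t}a_s)^{1/2} \leq 2(\sum_t a_t)^{1/2}$ valid for nonnegative $a_t$. In the Norm case it applies directly with $a_t = \dln\gt\drn^2$, giving $\sumunT \ps{\gt}{A_t^{-1}\gt} \leq 2\sqrt{\sumunT\dln\gt\drn^2}$; in the Diagonal case it applies per coordinate and sums to $2\Tr(A_T)$, which Cauchy--Schwarz again bounds by $2\sqrt d\,\sqrt{\sumunT\dln\gt\drn^2}$. In both cases the two pieces carry the same factor $\alpha$ ($1$ or $\sqrt d$), and collecting them with the $1/(2\eta)$ and $\eta/2$ weights yields exactly $\alpha(\eta + D^2/(2\eta))\sqrt{\sumunT\dln\gt\drn^2}$.

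The routine parts are the projection expansion and the scalar summation lemma. The step requiring the most care is the summation by parts with a time-varying metric: one must correctly isolate the telescoping boundary term, verify that $A_t - A_{t-1}\succeq 0$, and recognize that it is precisely \emph{here} that the two cases diverge, the Norm version remaining dimension-free while the Diagonal version pays a $\sqrt d$ in the Cauchy--Schwarz passage from $\Tr(A_T)$ to $\sqrt{\sumunT\dln\gt\drn^2}$. Keeping the two cases aligned so that a single $\alpha$ factors out of both pieces is the main bookkeeping obstacle.
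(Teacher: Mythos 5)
Your proof is correct, and its skeleton coincides with the paper's: the same one-step inequality obtained from nonexpansiveness of the $A_t$-projection together with $A_tA_t^{-1}\gt=\gt$ (\Cref{lem:pseudo-inverse}), the same telescoping rearrangement with positive semidefinite increments $A_t-A_{t-1}$ bounded through the diameter $D$, and the same final collection of the $\eta/2$ and $1/(2\eta)$ weights. The genuine divergence is in how you control $\sumunT \dln\gt\drn^2_{A_t^{-1}}$: the paper (first inequality of \Cref{lem:Lemma4}) proves $\sumunT \dln\gt\drn^2_{A_t^{-1}}\leq 2\Tr(A_T)$ by induction on $T$, invoking the matrix-trace inequality of \cite[Lemma 8]{Adagrad_article}, namely $X\succeq Y\succeq 0\implies 2\Tr\left(\left(X-Y\right)^{1/2}\right)+\Tr\left(X^{-1/2}Y\right)\leq 2\Tr\left(X^{1/2}\right)$, whereas you decompose coordinatewise---legitimate here since $A_t$ is scalar or diagonal in both variants---and apply the elementary scalar inequality $\sum_t a_t\left(\sum_{s\leq t}a_s\right)^{-1/2}\leq 2\left(\sum_t a_t\right)^{1/2}$ for nonnegative $a_t$, with the convention $0/0=0$ that is consistent with the Moore--Penrose pseudo-inverse zeroing exactly the coordinates whose accumulated gradients vanish. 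Your route is more elementary and self-contained; the paper's argument is heavier but indifferent to the structure of $A_t$, so it would survive unchanged for full-matrix AdaGrad, where no coordinatewise decomposition is available. The remaining differences are cosmetic: you bound the telescoped metric term as $D^2A_T$ directly in the Norm case and pay the $\sqrt d$ via Cauchy--Schwarz only in the Diagonal case, while the paper runs both cases uniformly through $D^2\Tr(A_T)$ (\Cref{lem:Lemma3}) followed by $\Tr(A_T)\leq\alpha\sqrt{\sumunT\dln\gt\drn^2}$ via Jensen (second inequality of \Cref{lem:Lemma4}); these are the same estimates in different clothing, and both correctly locate the $\sqrt d$ factor in the passage from $\Tr(A_T)$ to $\sqrt{\sumunT\dln\gt\drn^2}$.
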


\paragraph{SAGA and L-SVRG}
\label{sec:saga-l-svrg}

The very first variance-reduced algorithm for finite-sum optimization
is SAG (\parencite{roux2012stochastic,schmidt2017minimizing}) and was
quickly followed by SVRG \parencite{johnson2013accelerating}, SAGA
\parencite{SAGA_article} and L-SVRG \parencite{hofmann2015variance}, among others.
These algorithms are all variants of SGD,
where the basic finite-sum stochastic gradient estimator (which
at each step, computes the gradient of a single function $f_i$) was replaced
by a variance-reduced one. SVRG and L-SVRG compute and update in memory
a full gradient $\nabla f(x^{(t)})=\frac{1}{n}\sum_{i=1}^n\nabla f_i(x^{(t)})$ once
in a while, whereas SAG and SAGA compute and update a single gradient
$\nabla f_{i(t)}(x^{(t)})$ at each step.

We focus in this work on the SAGA and L-SVRG type gradient estimators
because they share handy theoretical properties. As a matter of fact,
they have been analyzed in a unified way in several works
(\parencite{hofmann2015variance,gorbunov2020unified,condat2022murana}).
 They are sometimes called \emph{loopless}
by contrast to SVRG and many of its variants, which are defined with
inner and outer loops making their analysis cumbersome in comparison.
We now quickly recall their respective constructions.

Given an initial point $x^{(1)} \in \mR^d$, the SAGA algorithm sets $\tilde
x_i^{(0)} = x^{(1)} \,\,(1 \leq i \leq n)$. Then, at each iteration $t \geq 1$,
$i(t) \sim \text{Unif}\{ 1, \dots, n\}$ is sampled,
an unbiaised estimator of $\nabla f\left(\xt\right)$ is computed as
\begin{equation}
    \tag{SAGA}
\label{eq:saga}
\textstyle \nabla f_{i(t)}\left(\xt\right)  -  \nabla f_{i(t)} \left(\tilde
x^{(t-1)}_{i(t)}\right)  +  \frac 1 n\sum\limits_{i=1}^n  \nabla f_i\left(\tilde x^{(t-1)}_i\right),
\end{equation}
and $\tilde x_{i}^{(t)}$ is set to $\xt$ for $i=i(t)$ and to $\tilde x_{j}^{(t-1)}$ otherwise.

The L-SVRG algorithm first sets $\tilde{x}^{(0)}$ to $x^{(1)}$. At
each iteration $t \geq 1$, $i(t) \sim \text{Unif}\{ 1,
\dots, n\}$ is sample and an unbiaised estimator of
$\nabla f \left(\xt\right)$ is computed as
\begin{equation}
  \tag{L-SVRG}
\label{eq:LSVRG}
\nabla f_{i(t)}\left(\xt\right)  -  \nabla f_{i(t)} \left(\wtm \right)+ \nabla f\left(\wtm\right),
\end{equation}
and $\wt$ is set to $x^{(t)}$ with probability $p$ and to $\wtm$ with
probability $1-p$, where $p\in (0,1)$ is a hyperparameter (often equal
to $1/n$).

\paragraph{Combining AdaGrad with SAGA and L-SVRG}
\label{sec:comb-adagr-with}
\begin{algorithm2e*}
\caption{AdaLVR \label{alg:AdaLVR} }
\Input{$x^{(1)} \in \mR^d$ initial point, $T \geq 1$ number of
iterations, $\eta >0 $ hyperparameter, $p\in (0,1)$.}
$G_0 \gets 0$\\
$\begin{cases}
\tilde{x}^{(0)}_i\leftarrow x^{(1)},\quad (1\leq i\leq n)&\text{(SAGA)}\\
\tilde{x}^{(0)}\leftarrow x^{(1)}&\text{(L-SVRG)}
\end{cases}$\\
\For{$ t = 1 \dots T-1$}{ Sample
$i(t) \sim \text{Unif}\{1,\dots,n\}$ \\ Compute
$\displaystyle \gt = \begin{cases}\nabla f_{i(t)}\left(\xt\right) - \nabla f_{i(t)}\left(\tilde x^{(t-1)}_{i(t)}\right) + \meann \nabla f_i\left(\tilde x^{(t-1)}_i\right)&\text{(SAGA)} \\ ~~\text{
or} \\ \nabla f_{i(t)}\left(\xt\right) - \nabla f_{i(t)} \left(\wtm \right) + \nabla f\left(\wtm\right)&\text{(L-SVRG)} \end{cases}$
\\ $\displaystyle
\begin{cases}
\text{Update }\tilde{x}_i^{(t)}=x^{(t)}\mathbf{1}_{\left\{ i=i(t) \right\}}+\tilde{x}_i^{(t-1)}\mathbf{1}_{\left\{ i\neq i(t) \right\}}&\text{(SAGA)}\\\text{or}\\
\text{sample $Z^{(t)}\sim \mathcal{B}(p)$ update }\tilde{x}^{(t)}=x^{(t)}\mathbf{1}_{\left\{ Z^{(t)}=1 \right\}}+x^{(t-1)}\mathbf{1}_{\left\{ Z^{(t)}=0 \right\}}&\text{(L-SVRG)}
\end{cases}$ \\
Update $G_{t}=G_{t-1}+ \begin{cases}\dln\gt\drn^{2} & \text {
(AdaGrad-Norm) } \\\text{or}\\ \operatorname{diag}\left(\gt \gtop\right) & \text
{ (AdaGrad-Diagonal) } \end{cases} $\\
Update $\xtp = \PI{\xt - \eta G_t^{-1/2}\gt}$\
}
\Output{$\barre x _T = \meanT \xt$.}
\end{algorithm2e*}
\begin{figure*}
\begin{center}
    \includegraphics[scale=0.23]{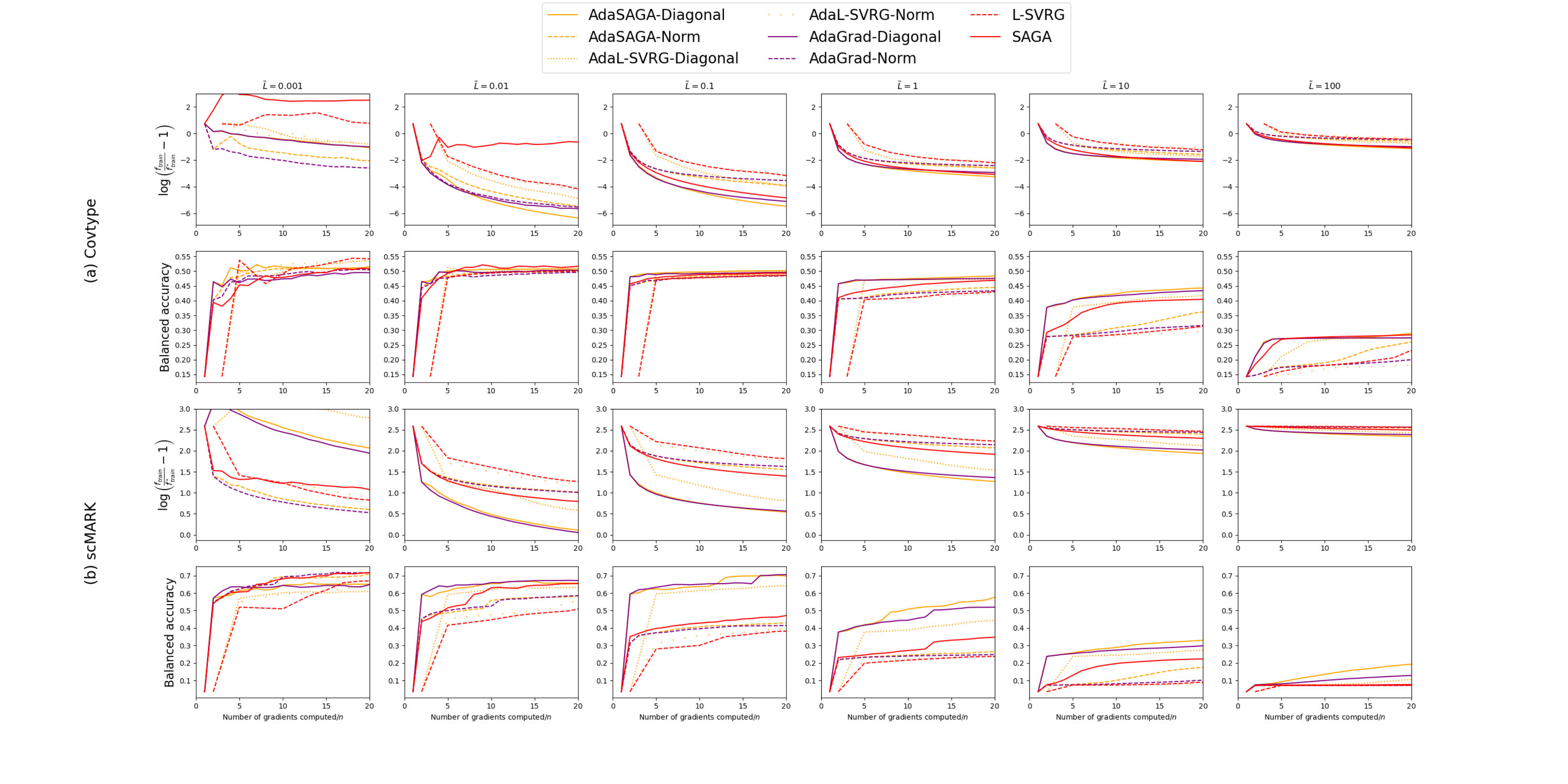}
\end{center}
\caption{Comparison of AdaLVR against SAGA, L-SVRG and AdaGrad.}
\label{fig:AdaptVRvsEach}
\end{figure*}
\begin{figure*}
\begin{center}
    \includegraphics[scale=0.23]{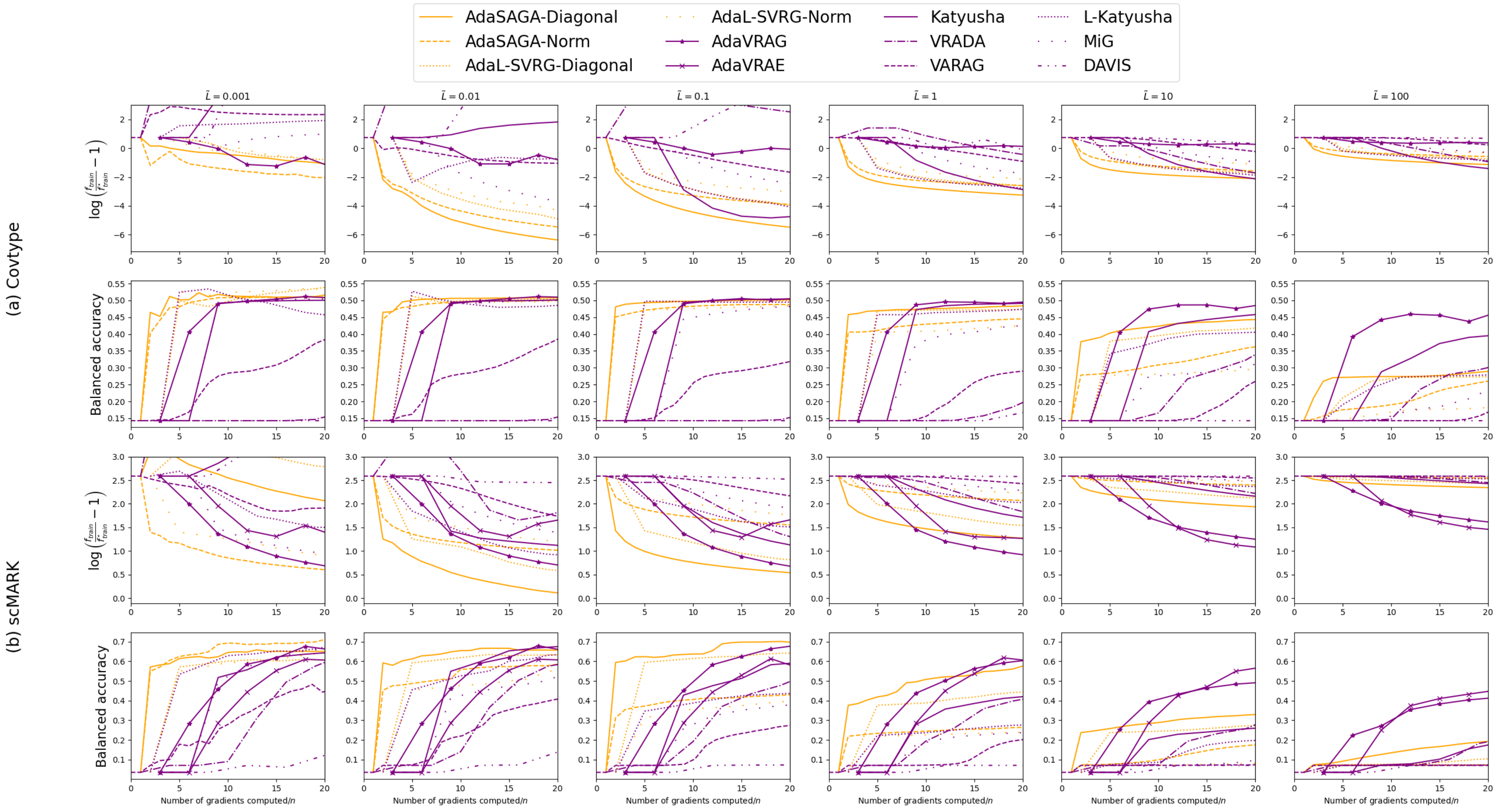}
\end{center}
\caption{Comparison of AdaLVR against accelerated algorithms with variance reduction.}
\label{fig:vsAccel}
\end{figure*}
\begin{figure*}
\begin{center}
    \includegraphics[scale=0.23]{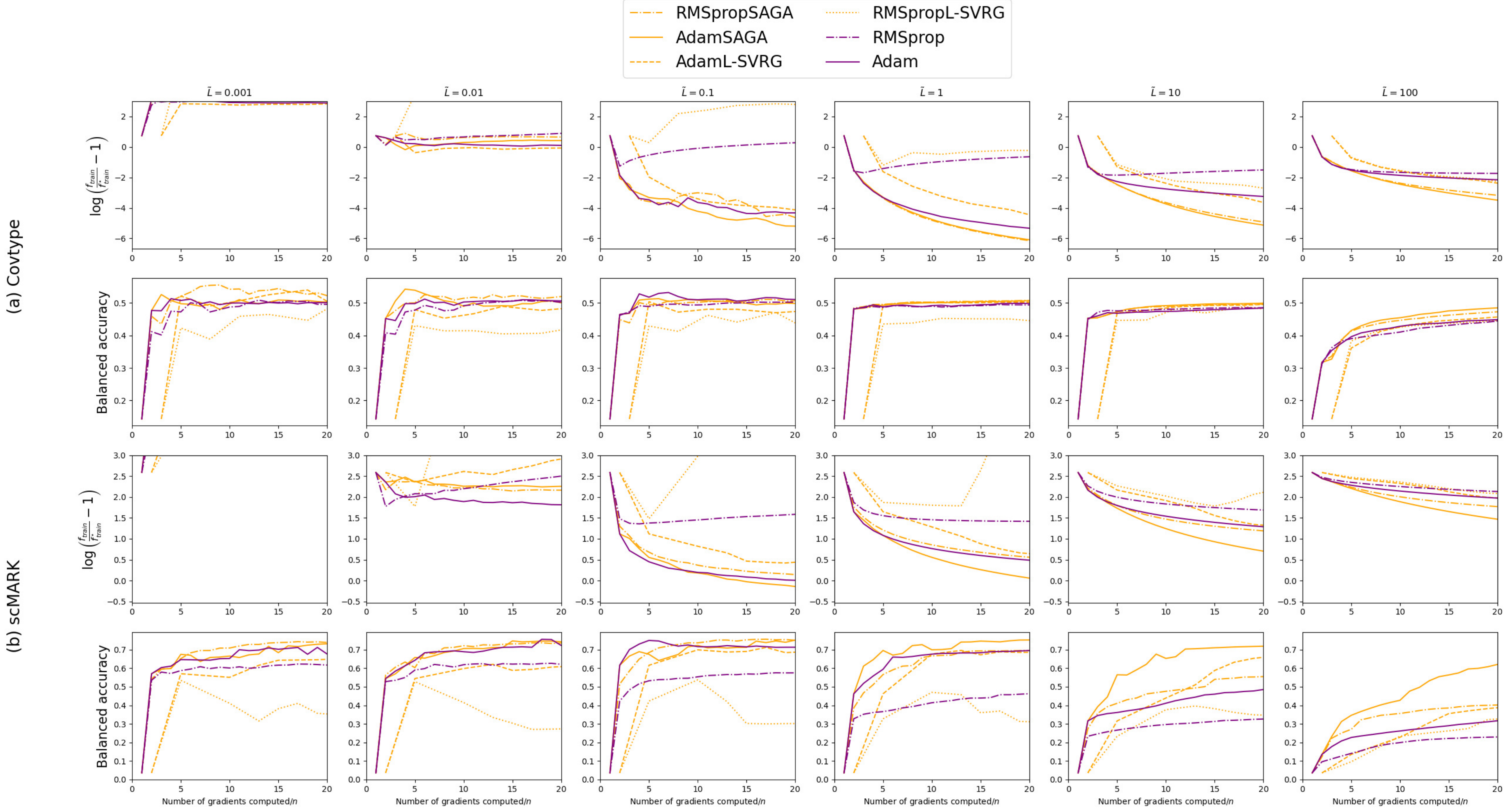}
\end{center}
\caption{Comparison of Adam and RMSprop with and without variance reduction.}
\label{fig:AdamRMSVR}
\end{figure*}

We define AdaLVR (Adaptive Loopless Variance Reduction) in \cref{alg:AdaLVR}, a family of algorithms which allows for four
different variants, by combining either AdaGrad-Norm or
AdaGrad-Diagonal with either SAGA or L-SVRG type gradient estimators.


\begin{restatable}{Theorem}{AdaLVR}\label{thm:AdaLVR}
Let $T \geq 1, \eta >0,  x^{(1)} \in \mR^d$ and $\left(\xt\right)_{1 \leq t
\leq T}$ be a sequence of iterates defined by \Cref{alg:AdaLVR} with $p=1/n$,
and where the AdaGrad variant (Norm or Diagonal) and the gradient estimator type
(SAGA or L-SVRG) are chosen at the beginning and remain constant.
Then, under \Cref{ass:convex-smooth,ass:minimum}, it holds that:
$$     \textstyle{\esp{f(\barre{x}_T)} - f\left(\xstar\right) \leq   \frac{
\alpha \left(\eta + \frac {D^2} {2\eta} \right) \sqrt{4Ln \Delta_1 } + 8L\alpha^2\left(\eta + \frac {D^2} {2\eta}\right)^2}{T}}$$
where $\barre{x}_T = \meanT \xt$, $\Delta_1= f\left(x^{(1)}\right) - f\left(\xstar\right)$ and $\alpha=1$ (resp.\ $\alpha=\sqrt{d}$) for the Norm variant (resp.\ the
Diagonal variant).
\end{restatable}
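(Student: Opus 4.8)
The plan is to turn the regret-type bound of \Cref{lem:convenientBoundAdaGrad} into a bound on the averaged suboptimality, to control the sum of squared estimator norms via a variance-reduction argument shared by both estimators, and finally to resolve the resulting self-referential inequality.

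First I would use unbiasedness: conditionally on the past, both \eqref{eq:saga} and \eqref{eq:LSVRG} satisfy $\espt{\gt} = \nabla f(\xt)$, so convexity of $f$ gives $\mE[f(\xt)] - f(\xstar) \leq \esp{\ps{\gt}{\xt - \xstar}}$. Summing over $t$, applying \Cref{lem:convenientBoundAdaGrad} along each sample path, and invoking Jensen's inequality for the concave square root yields, with $c \triangleq \alpha\left(\eta + \tfrac{D^2}{2\eta}\right)$,
\[ \sumunT \left(\mE[f(\xt)] - f(\xstar)\right) \leq c\,\sqrt{\esp{\sumunT \dln \gt \drn^2}}. \]
A second use of Jensen's inequality (for the average iterate $\barre{x}_T$) shows the left-hand side dominates $T\left(\esp{f(\barre{x}_T)} - f(\xstar)\right)$. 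Writing $F$ for the left-hand sum and $S \triangleq \esp{\sumunT \dln\gt\drn^2}$, this reads $F \leq c\sqrt{S}$.

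The heart of the proof is to bound $S$ back in terms of $F$, uniformly over the two estimators. I would introduce the Bregman divergence $\mathcal{D}_i(x) = f_i(x) - f_i(\xstar) - \ps{\nabla f_i(\xstar)}{x - \xstar} \geq 0$ (so that $\frac1n\sumunn \mathcal{D}_i(x) = f(x) - f(\xstar)$, using $\nabla f(\xstar) = 0$ from \Cref{ass:minimum}) and the Lyapunov functional $\Phi_t = \frac1n\sumunn \mathcal{D}_i(\tildextmi)$ for SAGA, resp.\ $\Phi_t = f(\wtm) - f(\xstar)$ for L-SVRG. Decomposing $\gt$ as $\left[\nabla f_{i(t)}(\xt) - \nabla f_{i(t)}(\xstar)\right]$ plus a conditionally mean-zero correction, using $\dln a + b\drn^2 \leq 2\dln a\drn^2 + 2\dln b\drn^2$, bounding the variance of the correction by its second moment, and applying the standard smoothness inequality $\dln \nabla f_i(x) - \nabla f_i(\xstar)\drn^2 \leq 2L\,\mathcal{D}_i(x)$, I expect to reach in both cases the identical bound $\espt{\dln\gt\drn^2} \leq 4L\left(f(\xt) - f(\xstar)\right) + 4L\Phi_t$. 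With $p = 1/n$, the two update rules also yield the same contraction $\espt{\Phi_{t+1}} = \left(1 - \tfrac1n\right)\Phi_t + \tfrac1n\left(f(\xt) - f(\xstar)\right)$ and the same initialization $\mE[\Phi_1] = \Delta_1$ (both $\tildextmi$ and $\wtm$ start at $x^{(1)}$); this coincidence is precisely what makes the analysis unified. Summing the contraction and telescoping gives $\sumunT \mE[\Phi_t] \leq n\Delta_1 + F$, whence $S \leq 8LF + 4Ln\Delta_1$.

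The main obstacle, and the step that closes the argument, is that $F$ and $S$ are mutually coupled: the regret bound gives $F \leq c\sqrt{S}$ while variance reduction gives $S \leq 8LF + 4Ln\Delta_1$. Substituting the latter into the former produces the quadratic inequality $F^2 \leq c^2\left(8LF + 4Ln\Delta_1\right)$ in the single unknown $F$; solving it and bounding $\sqrt{a + b} \leq \sqrt{a} + \sqrt{b}$ gives $F \leq 8Lc^2 + c\sqrt{4Ln\Delta_1}$. Dividing by $T$ and using $T\left(\esp{f(\barre{x}_T)} - f(\xstar)\right) \leq F$ then yields the claim. The technical care needed is chiefly in running the parallel bookkeeping for the per-index anchors $\tildextmi$ (SAGA) against the single anchor $\wtm$ (L-SVRG) to confirm the shared recursion and variance bound, and in tracking the constant $4L$ from co-coercivity through the final quadratic.
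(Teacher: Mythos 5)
Your proposal is correct and follows essentially the same route as the paper: regret bound from Lemma~\ref{lem:convenientBoundAdaGrad} plus unbiasedness and Jensen, a unified variance-reduction bound $S \leq 8LF + 4Ln\Delta_1$ (which is exactly Proposition~\ref{prop:adatele}, your Lyapunov contraction being the paper's Lemmas~\ref{lem:normDiffPhiSAGA} and~\ref{lem:normDiffPhiLSVRG} rearranged before telescoping), and resolution of the resulting quadratic inequality via Lemma~\ref{lem:simple_inequality}. All constants match the stated bound.
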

The above convergence guarantee corresponds to a gradient complexity
of $O(n+(L+\sqrt{nL})/T)$ which holds
\emph{adaptively} to the smoothness coefficient $L$.
\begin{Remark}
    In \Cref{ass:convex-smooth}, the convexity assumption of each $f_i$
        cannot be relaxed to the sole assumption of $f$ being convex as the
        analysis of
SAGA \& L-SVRG gradient estimators crucially rely on it.
The smoothness however, can be interestingly relaxed in the following fashion:
because the proposed method is adaptive to unknown smoothness, the
analysis can be slightly modified to establish an adaptivity to a
smoothness coefficient which only holds along the trajectory of
iterates. This "local" smoothness coefficient
may be much smaller (sometimes by several orders of magnitude) than
the global coefficient, or the latter may not even exist. However, we
chose to state our results with the global coefficient, for the sake
of simplicity and comparison purposes.
\end{Remark}
\section{Numerical experiments}
We conduct numerical experiments to assess the good empirical performance
of AdaLVR
over AdaGrad, SAGA, L-SVRG as well as state-of-the-art accelerated
variance-reduced algorithms.
We also consider as heuristics RMSprop and Adam combined with SAGA and
L-SVRG type gradient estimators.
We consider multivariate logistic regression with no regularization
on the Covtype
\parencite{misc_covertype_31}\bin{\footnote{http://archive.ics.uci.edu/dataset/31/covertype}}
and scMARK \parencite{scMark} datasets. The reproducibility of all the experiments can be effortlessly ensured using the provided code.zip file in the supplementary materials.
\paragraph{Experimental details and preprocessing}
The Covtype dataset has $n=581012$ samples ($30 \times 30$ meter cell) with 54 categorical features (cartographic variables). Each sample is labeled by its forest cover type, taking 7 different values.
scMARK is a benchmark for scRNA data (single-cell Ribonucleic acid) with
$n=19998$ samples (cells) and 14059 features (gene expression). Each sample is
labeled by its cell type, taking 28 different values. We only keep the 300 features
with largest variance.
A min-max scaler is applied as preprocessing to both datasets as well as a train-test split procedure ($n_{\text{train}} \simeq  0.8n$, $n_{\text{test}} \simeq 0.2n$).  We evaluate each algorithm
based on two metrics: \emph{the objective function} i.e.\ multivariate logistic
      regression loss computed on the train set and \emph{the balanced accuracy score} i.e.\ a weighted average of the accuracy score computed on the test set.

For the covtype (resp. scMARK) dataset, we use
a mini-batch size of $10$ (resp. $1$), and the dimension of the
parameter space is $d = 378 = 54\times 7$ (resp. $d = 8400=28\times300$). Experiments were run on a machine with 32 CPU cores and 64 GB of memory.

For most algorithms (including SAG, SAGA, (L-)SVRG, and accelerated variants), the hyperparameter $\eta$ must
typically be chosen proportionnally to the inverse of $L$ to yield the
best convergence guarantees, and larger values may lead to divergence.
However, such a choice often leads to poor practical performances and a larger
values of $\eta$ may result in faster convergence. Besides, the smoothness
coefficient $L$ may be unknown. Other algorithms such as AdaGrad
has guaranteed convergence for all values of its hyperparameter, but the
performance still depends on its value. The most common approach is to
tune $\eta$ via a ressource-heavy grid-search.
We compare the performance of the algorithms for several values of
their respective hyperparameters, each value corresponding to a guess
of the value of the smoothness parameter $L$. Although there is no a
priori relation between $L$ and the hyperparameter of adaptive
algorithms based on AdaGrad, RMSprop and Adam, their hyperparameter is
chosen on the same grid as the other algorithms, for ease of comparison.
To avoid any confusion with the true parameter $L$ resulting from the loss
function, we denote $\tilde L$ the coefficient given to the algorithm.

\paragraph{AdaLVR against SAGA, L-SVRG and AdaGrad}
Following customary practice, we skip the projection step in AdaGrad and its variants.
More attention will be put on the Diagonal versions of AdaGrad and AdaLVR since they outperform
the Norm versions in almost all cases.
We compare AdaLVR against SAGA, L-SVRG and AdaGrad in \Cref{fig:AdaptVRvsEach}.
We observe that adding the SAGA variance reduction gradient estimator to AdaGrad-Diagonal
gives a faster convergence of the objective function for $10$ out of $12$ plots (all except (b): $\tilde L =
0.001, 0.01$). The balanced accuracy is better or equivalent for AdaSAGA-Diagonal in every plot, and
significantly better in $7$ out of $12$ plots ((a): $\tilde L = 0.001, 1, 10,
100$, (b): $\tilde L = 1, 10, 100$).
In (a), AdaLVR-Diagonal gives similar balanced accuracy to SAGA but goes closer to the optimum and converges in more plots.
In (b), the AdaLVR-Diagonal is much more robust to the choice of the hyperparameter than SAGA or L-SVRG, giving
good balanced accuracy for $4$ plots out of $6$ ($\tilde L= 0.001, 0.01, 0.1, 1$), and
only $2$ for SAGA and L-SVRG ($\tilde L = 0.001, 0.01$). The perfomances of AdaLVR-Diagonal in terms of objective function are much better than SAGA or L-SVRG, going closer to the optimum in more plots.

Note that -Diagonal versions performs better than -Norm in the majority of cases, at the
small cost of an extra $O(d)$ memory storage. Moreover, SAGA computes $3$ times less gradients
per iteration than L-SVRG, although this is reflected on the plots as the
x-axis corresponds to the number of computed gradients; but stores $n$
gradients (one per sample) whereas L-SVRG stores only one gradient. If memory
is not an issue, AdaSAGA-Diagonal is recommended, otherwise AdaLSVRG-Diagonal is preferred.

An intriguing behavior happens for plots where the learning rate is high, as the
logistic loss and balanced accuracy seem to be positively correlated or not
correlated at all instead of negatively correlated. This can be explained as one can predict the wrong class and
have a relatively low or high loss. When the learning rate is high (i.e. $\tilde L$ close
to $0$), parameters tend to have a high loss when predicting the wrong class. This is
reflected if graphs are split between right and wrong predicted samples.

\paragraph{AdaLVR against accelerated and variance-reduced algorithms}
We compare AdaLVR against state-of-the-art accelerated algorithms (MiG,
Katyusha, L-Katyusha, VRADA, VARAG, DAVIS)  and adaptive accelerated algorithms
(AdaVRAG, AdaVRAE) in \Cref{fig:vsAccel}. We did not display AdaVRAE in
(a) as it did not give satisfying results.
We observe that non-adaptive accelerated algorithms are sensitive to the choice of the hyperparameter and converge only when its value is small
enough. By contrast, AdaLVR, AdaVRAE (only in (b)) and AdaVRAG are much more robust to the choice of the
hyperparameter, converging in more plots. Nevertheless, AdaLVR adapts
instantly and effectively to the smoothness, as the loss plummets on the first
epoch in the majority of plots while the decrease is much slower for AdaVRAG and AdaVRAE.

\paragraph{RMSprop and Adam with variance-reduced gradient estimator}
We here suggest the combination variance reduction with the RMSprop and
Adam algorithms, which are variants of AdaGrad-Diagonal, that
demonstrate much better practical performance.

AdaGrad accumulates the magnitute of the gradient estimators $\gt$ to adapt its stepsize, so that the
coefficients of $G_t$ (resp.\ $A_t^{-1}$) can only increase (resp.\
decrease) which may slow optimization down. RMSprop~\parencite{ruder2017overview}
remedies this by replacing the accumulation (meaning the regular sum) by a
discounted sum, thereby allowing the $G_t$ quantity to decrease:
\begin{equation} \label{eq:rmsprop_update}
    G_t = \gamma \diag\left(\gt \gtop\right) + \left(1 - \gamma \right)G_{t-1} ,\quad t \geq 1,
\end{equation}
with $0< \gamma <1$. Adam~\parencite{kingma2015adam} also adds momentum:
\begin{equation}\label{eq:adam_update}  \begin{array}{ll}
   & \mt  =  \beta_1 \mtm + \left(1 - \beta_1 \right) \gt \\
   & G_t  = \beta_2 G_{t-1} + \left(1 - \beta_2 \right) \diag\left(\gt \gtop\right),\\
   & \xtp = \xt - \eta G_t^{-\sfrac 1 2}\mt,
\end{array} \end{equation}
with $0 < \beta_1, \beta_2< 1 $. Both algorithms demonstrate excellent
performance and have given rise to many variants, even though their theoretical properties are less understood than AdaGrad.
We suggest replacing $\gt$ by SAGA or L-SVRG-type gradient estimator in \eqref{eq:rmsprop_update} and \eqref{eq:adam_update}. 
Numerical experiments are plotted in \Cref{fig:AdamRMSVR} (we set
$\gamma = 0.9$ for RMSprop and $\beta_1=0.9, \beta_2=0.999$ for Adam). Regarding
the objective function, the effectiveness of the method clearly
appears for RMSpropSAGA, AdamSAGA and AdamL-SVRG in $8$ plots out of
$12$ ($\tilde L= 0.1, 1,10,100$), while RMSprop struggles to converge when
combined with L-SVRG. The balanced accuracy is always greater or equal
when adding the SAGA-type gradient estimator, giving significantly
better results for $7$ plots out of $12$ ((a): $\tilde L=10,100$ and all
plots of (b) except $\tilde L=0.01$) and is sometimes multiplied by $2$
(AdamSAGA in (b): $\tilde L=100$, RMSpropSAGA in (b): $\tilde L=10,100$).

\section{Conclusion and perspectives}
In the context of the minimization of a sum of $L$-smooth convex
functions, we introduce AdaLVR, combining AdaGrad and SAGA or
L-SVRG-type gradient estimators. We demonstrate that AdaLVR takes the
best of both worlds, inheriting adaptiveness and low sensitivity to
the hyperparameter from AdaGrad, and fast convergence rates from
variance-reduced methods. We empirically show that Adam and RMSprop
benefit from variance reduction even more.

A natural follow-up of this work would be to combine AdaGrad with
acceleration techniques and loopless variance reduction to match the optimal
gradient complexity. Another improvement would be to
consider composite functions containing a possibly non-smooth
regularization term. Besides, the study of the nonconvex and
strongly convex cases are of interest. While not performed in
practice, we rely on the projection step to obtain convergence
guarantees, and an analysis of AdaLVR without projection would be
relevant. In addition, more general classes of variance reduced
algorithms---e.g. JacSketch~\parencite{gower2021stochastic}, Memorization
Algorithms~\parencite{hofmann2015variance}; may be analyzed along with
AdaGrad, allowing in particular to perform non-uniform sampling, the
use of mini-batches, and variants of SAGA with low memory requirements.
Finally, convergence guarantees for Adam and RMSprop combined with
variance reduction is an interesting line of research.

\onecolumn
\appendices
\addtocontents{toc}{\protect\setcounter{tocdepth}{0}}
\section{Proofs}
\AdaGradLemma*
\begin{proof}
  Let $1 \leq t \leq T$ and consider $z \triangleq \xt - \eta A_t ^{-1} \gt$ so that
  \begin{equation} \label{eq:update_rule_proof}
        z- \xstar = \xt - \xstar - \eta A_t^{-1} \gt.
\end{equation}
Multiplying both sides by $A_t$ and using $A_t A_t^{-1}\gt = \gt$ (\Cref{lem:pseudo-inverse}), we get

\begin{equation} \label{eq:A_t_update_rule_proof}
    A_t\left(z - \xstar\right) = A_t\left(\xt - \xstar\right) - \eta \gt.
\end{equation}
Taking the scalar product of \eqref{eq:update_rule_proof} and \eqref{eq:A_t_update_rule_proof}
\begin{align*}\ps{z- \xstar}{A_t\left(z - \xstar\right)} = & \ps{\xt -
\xstar}{A_t\left(\xt - \xstar\right)} - \eta \ps{\gt} {\xt -\xstar} \\
& -\eta \ps{A_t^
{-1}\gt} {A_t\left(\xt -\xstar\right)} + \eta^2 \ps{\gt}{A_t^{-1} \gt}.
\end{align*}
Using \Cref{lem:pseudo-inverse} once again gives

$$\ps{A_t^ {-1}\gt} {A_t\left(\xt -\xstar\right)} = \ps{A_tA_t^ {-1}\gt} {\xt -\xstar} = \ps{\gt} {\xt -\xstar} $$
so that
$$\ps{z - \xstar}{A_t\left(z - \xstar\right)} =\ps{\xt - \xstar}{A_t\left(\xt - \xstar\right)} - 2\eta \ps{\gt} {\xt -\xstar} + \eta^2 \ps{\gt}{A_t^{-1} \gt}. $$
Rearranging the terms gives
        \begin{align*}\ps{\gt}{\xt - \xstar} & = \frac \eta 2 \dln \gt\drn^2_{A_t ^{-1}} + \frac 1 {2\eta} \left(\dln \xt - \xstar \drn _{A_t} ^2 - \dln z - \xstar \drn _{A_t} ^2\right)\\
        & \leq \frac \eta 2 \dln \gt\drn^2_{A_t ^{-1}} + \frac 1 {2\eta} \left(\dln \xt - \xstar \drn _{A_t} ^2 - \dln x^{(t+1)} - \xstar \drn _{A_t} ^2\right).
        \end{align*}
    We used
      $ \dln \xtp - \xstar \drn_{A_t} \leq \dln z - \xstar \drn_{A_t}$
      since $\xtp = \PI{z}$. Summing the second term of the above
    right hand side gives
\begin{align*}
      \sumunT \left(   \dln \xt - \xstar \drn _{A_t} ^2 -\dln x^{(t+1)} -  \xstar \drn _{A_t} ^2\right)
     = &   \sumunT \dln \xt - \xstar \drn_{A_t} ^2 - \sum_{t = 2} ^{T+ 1} \dln \xt - \xstar \drn _{A_{t-1}} ^2  \\
   = & \sum_{t=1}^T \left(\dln \xt - \xstar \drn _{A_t} ^2 - \dln \xt - \xstar
   \drn _{A_{t-1}} ^2  \right) \\
     &+ \dln x^{(1)} -\xstar\drn _{A_0}^2 - \dln x^{(T+1)} - \xstar \drn _{A_T} ^2  \\
     \leq &\sum_{t=1}^T \dln \xt - \xstar \drn _{A_t - A_{t-1}} ^2  ,\\
\end{align*}
where $A_0 = 0$ so that $\dln x^{(1)} - \xstar
\drn ^2_{A_0} = 0$.  Now, using \Cref{lem:Lemma3} and the first inequality of \Cref{lem:Lemma4} gives
\begin{align*}
\sumunT \ps{\gt}{\xt - \xstar} & \leq \frac \eta 2 \sumunT  \dln \gt\drn^2_{A_t
^{-1}} + \frac 1 {2\eta} \left(\sum_{t=2}^T \dln \xt - \xstar \drn _{A_t - A_{t-1}}
^2 \right) \\
& \leq \left( \eta +  \frac{D^2}{2\eta} \right)  \Tr(A_T),
\end{align*}
Using the second inequality of \Cref{lem:Lemma4} ends the proof.
\end{proof}

\bin{
\AdaGrad*

\begin{proof}
 Thanks to \Cref{coro:bornAdaGrad}
$$\sumunT \mE \left[ f\left(\xt\right)   - f\left(\xstar\right)\right] \leq \alpha \mE  \left[ \sqrt{\sumunT \dln g^{(t)} \drn ^2}\right].$$
where $\alpha$ is defined as in \Cref{lem:convenientBoundAdaGrad}. We look at $ \sqrt{\sumunT \dln g^{(t)} \drn ^2}:$
\begin{align*}
     \sqrt{\sumunT \dln g^{(t)} \drn ^2   } & \leq  \sqrt{2 \sumunT \dln \nabla f\left(\xt\right) \drn^2+ 2 \sumunT \dln g^{(t)} - \nabla f \left(\xt\right) \drn ^2 }.
\end{align*}
Using Jensen's equality on $x \mapsto \sqrt x$ gives
\begin{align*}
\sumunT \mE \left[ f\left(\xt\right)   - f\left(\xstar\right)\right]  & \leq \sqrt 2 \alpha  \sqrt{ \sumunT  \mE \left[ \dln \nabla f \left(\xt\right) \drn^2 \right]  +   \sumunT \mE \left[ \dln g^{(t)} - \nabla f\left(\xt\right) \drn ^2 \right] } \\
& \leq \sqrt 2 \alpha   \sqrt{2L  \sumunT \mE \left[ f\left(\xt\right)   - f\left(\xstar\right)\right]  +  \sumunT \sigma_t^2  },
\end{align*}
where $\sigma_t^2 \triangleq   \esptnd{\gt- \nabla f\left(\xt\right) }, t\geq 1  $. We used \Cref{coro:normGradfLeqf} at the second line. Squaring gives
\begin{align*}
    \left(\sumunT \mE \left[ f\left(\xt\right)   - f\left(\xstar\right)\right]\right) ^2 \leq 4\alpha^2 L \left( \sumunT \mE \left[ f\left(\xt\right)   - f\left(\xstar\right)\right] + \frac 1 {2L}  \sumunT \sigma_t^2 \right).
\end{align*}
Using \Cref{lem:simple_inequality}, we get
    $$\sumunT \mE \left[ f\left(\xt\right)   - f\left(\xstar\right) \right]\leq
    4\alpha^2L  + \alpha\sqrt{2  \sumunT \sigma_t^2  }.$$
Dividing by $T$, using convexity, Jensen's inequality and the assumption on the gradients gives
$$\mE \left[ f(\barre x_T)\right]   - f\left(\xstar\right) \leq \frac  {4\alpha^2
L} T + \frac {\alpha\sqrt { 2   \sumunT \sigma_t  }  } T \leq \frac
{4\alpha^2L} T + \alpha K \sqrt {\frac { 2 }   {T}}. $$
We note that $\alpha = \gamma$ in AdaGrad-norm and $\alpha = \sqrt d \gamma $
in Diagonal-AdaGrad, ending the proof.
\end{proof}}

\begin{Proposition}\label{prop:adatele}
    Let $T\geqslant 1$ and $\left(\xt\right)_{1 \leq t \leq T}$
        defined by \Cref{alg:AdaLVR}. Then,
        \[ \esp{\sumunT \esptnd{\gt}}  \leq \esp{8L\sumunT\left( f\left(\xt\right) -
    f\left(\xstar\right) \right)}  + 4L n\left(f\left(x^{(1)} \right) -
f\left(\xstar\right)\right). \]
\end{Proposition}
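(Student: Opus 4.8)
The plan is to bound the conditional second moment $\esptnd{\gt}$ at each step by a term controlled by the current optimality gap $f(\xt) - f(\xstar)$ plus a ``memory'' potential $H_{t-1}$ measuring how stale the stored reference points are, and then to telescope the evolution of that potential so that its contribution collapses into the initial gap. I would treat SAGA and L-SVRG simultaneously by introducing, for each $1 \leq i \leq n$, the nonnegative Bregman-type quantity $D_i(x) \triangleq f_i(x) - f_i(\xstar) - \ps{\nabla f_i(\xstar)}{x - \xstar}$, which by \Cref{ass:convex-smooth} satisfies the co-coercivity bound $\dln \nabla f_i(x) - \nabla f_i(\xstar)\drn^2 \leq 2L\, D_i(x)$. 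Since $\xstar$ minimizes $f$ over all of $\mR^d$ by \Cref{ass:minimum}, we have $\nabla f(\xstar) = 0$, whence $\meann D_i(x) = f(x) - f(\xstar)$.

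First I would rewrite both estimators in the common form $\gt = a_{i(t)} - (b_{i(t)} - \bar b)$, where $a_i \triangleq \nabla f_i(\xt) - \nabla f_i(\xstar)$, where $b_i$ is the analogous increment evaluated at the stored reference point ($\tildextmi$ for SAGA, the common anchor $\wtm$ for L-SVRG), and $\bar b \triangleq \meann b_i$; this identity uses $\nabla f(\xstar) = 0$ together with $\meann \nabla f_i(\wtm) = \nabla f(\wtm)$ in the L-SVRG case. Applying $\dln u - v\drn^2 \leq 2\dln u\drn^2 + 2\dln v\drn^2$, then the variance-versus-second-moment bound $\espt{\dln b_{i(t)} - \bar b\drn^2} \leq \espt{\dln b_{i(t)}\drn^2}$ (valid since $\bar b = \espt{b_{i(t)}}$), and finally the co-coercivity inequality averaged over the uniform index $i(t)$, I obtain the per-step estimate $\esptnd{\gt} \leq 4L\bigl(f(\xt) - f(\xstar)\bigr) + 4L\,H_{t-1}$, where $H_{t-1} \triangleq \meann D_i(\tildextmi)$ for SAGA and $H_{t-1} \triangleq f(\wtm) - f(\xstar)$ for L-SVRG.

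The crux is to control $\sumunT \esp{H_{t-1}}$. I would show that in both variants the potential obeys the same one-step recursion $\espt{H_t} = \frac 1 n\bigl(f(\xt) - f(\xstar)\bigr) + \bigl(1 - \frac 1 n\bigr)H_{t-1}$: for SAGA each coordinate $\tildexti$ is independently refreshed to $\xt$ with probability $1/n$, and for L-SVRG the entire anchor is refreshed with probability $p = 1/n$, so in each case a $\frac 1 n$-fraction of the potential is replaced by $\meann D_i(\xt) = f(\xt) - f(\xstar)$. Taking full expectations and rearranging gives $\frac 1 n \esp{H_{t-1}} = \frac 1 n \esp{f(\xt) - f(\xstar)} - \bigl(\esp{H_t} - \esp{H_{t-1}}\bigr)$; summing over $t = 1, \dots, T$ telescopes the right-hand difference into $\esp{H_0} - \esp{H_T}$. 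Using $H_T \geq 0$ and the initialization $\tilde x^{(0)}_i = x^{(1)}$ (resp.\ $\tilde x^{(0)} = x^{(1)}$), which gives $H_0 = f(x^{(1)}) - f(\xstar)$, yields $\sumunT \esp{H_{t-1}} \leq \sumunT \esp{f(\xt) - f(\xstar)} + n\bigl(f(x^{(1)}) - f(\xstar)\bigr)$.

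Combining the per-step estimate with this telescoped bound produces exactly $\esp{\sumunT \esptnd{\gt}} \leq 8L\sumunT \esp{f(\xt) - f(\xstar)} + 4Ln\bigl(f(x^{(1)}) - f(\xstar)\bigr)$. I expect the main obstacle to be the telescoping step: one must verify that the memory potential satisfies a clean contraction-type recursion that is \emph{identical} in the SAGA and L-SVRG cases despite their structurally different updates, and then exploit $H_T \geq 0$ to discard the terminal term. Tracking the constants and the factor $n$ carefully here is precisely what turns the per-step coefficient $4L$ into the overall $8L$ and produces the additive $4Ln(f(x^{(1)}) - f(\xstar))$ term.
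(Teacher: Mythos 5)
Your proof is correct and follows essentially the same route as the paper's: the same variance decomposition of $\gt$ (the paper's \Cref{lem:normgtGen}), the same co-coercivity bound via \Cref{lem:Lsmoothconseq}, and the same Bregman potential $\sumunn D_{f_i}(\cdot,\xstar)$ with its $1/n$-refresh recursion, telescoped with the terminal term dropped and the initialization $\tilde{x}^{(0)}_i=x^{(1)}$ giving the $4Ln\Delta_1$ term. The only cosmetic differences are that you normalize the potential by $1/n$ and telescope it as a separate sum (the paper substitutes the recursion into the per-step bound via \Cref{lem:normDiffPhiSAGA,lem:normDiffPhiLSVRG} before summing), and that in SAGA the coordinates are refreshed mutually exclusively rather than ``independently''---immaterial here, since the recursion only uses the marginal refresh probability $1/n$ through linearity of expectation.
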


\begin{proof}
    Let 1 $\leq t \leq T$. We first deal with the case of SAGA-type gradient
    estimator. Using
    \cref{lem:normgtGen} with
    $y^{(t)}_i = \tildexti \left(1 \leq i \leq n\right)$ gives
\[\esptnd{\gt} \leq 2 \esptnd{\nabla f_{i(t)}\left(\xt \right) -  \nabla f_{i(t)}\left(\xstar\right) }  + 2 \esptnd{\nabla f_{i(t)} \left(\tildextmri\right)  -  \nabla
f_{i(t)}\left(\xstar\right)}.\]
Using \cref{coro:normDiffTat} for the first term and \cref{lem:normDiffPhiSAGA} for the second gives:
\begin{align*}\esptnd{\gt} \leq & 4L\left(f\left(\xt\right) -
f\left(\xstar\right)\right) \\
& + 4L\left(f\left(\xt\right) -
f\left(\xstar\right) +\sumunn D_{f_i}\left(\tildextmi,\xstar\right) -
\espt{\sumunn D_{f_i}\left(\tildexti ,\xstar\right)}\right) \\
 =&  8L\left(f\left(\xt\right) -
f\left(\xstar\right)
\right)+ 4L\left(\sumunn D_{f_i}\left(\tildextmi,\xstar\right) -
\espt{\sumunn D_{f_i}\left( \tildexti ,\xstar\right)}\right).
\end{align*}
Summing in $t$ and taking the expectation gives
\begin{align}
    \esp{\sumunT \esptnd{\gt}}  \leq &\esp{8L\sumunT\left(
    f\left(\xt\right) - f\left(\xstar\right) \right)} \nonumber \\
            & +\esp{4L \sumunT\left(\sumunn
D_{f_i}\left(\tildextmi ,\xstar\right) - \espt{\sumunn
D_{f_i}\left(\tildexti ,\xstar\right)}\right)} \nonumber\\
     = &\esp{8L\sumunT\left( f\left(\xt\right) - f\left(\xstar\right) \right)
    + 4L \sumunT\left(\sumunn D_{f_i}\left(\tildextmi,\xstar\right) -
\sumunn D_{f_i}\left(\tildexti,\xstar\right)\right)} \nonumber\\
     \leq &\esp{8L\sumunT\left( f\left(\xt\right) -
    f\left(\xstar\right) \right)}  + 4L\underbrace{\sumunn D_{f_i}\left(
    \tilde{x}_i^{(0)},\xstar\right)}_{= n\left(f\left(x^{(1)} \right) -
f\left(\xstar\right)\right)},
\end{align}
where we used the tower property at the third line, recognized a telescopic sum at
the last line and removed its last term since nonpositive, hence the result.

We turn to the case of L-SVRG type gradient estimator. Using \cref{lem:normgtGen} with
    $y^{(t)}_i = \wt \left(1 \leq i \leq n\right)$ gives
\[\esptnd{\gt} \leq 2 \esptnd{\nabla f_{i(t)}\left(\xt \right) -  \nabla f_{i(t)}\left(\xstar\right) }  + 2 \esptnd{\nabla f_{i(t)} \left(\wtm\right)  -  \nabla
f_{i(t)}\left(\xstar\right)}.\]
Using \cref{coro:normDiffTat} on the first term and \cref{lem:normDiffPhiLSVRG} on the second gives:
\begin{align*}\esptnd{\gt} \leq & 4L\left(f\left(\xt\right) -
f\left(\xstar\right)\right) \\
& + 4L\left(f\left(\xt\right) -
f\left(\xstar\right) +\sumunn D_{f_i}\left(\wtm,\xstar\right) -
\espt{\sumunn D_{f_i}\left(\wt ,\xstar\right)}\right) \\
 =&  8L\left(f\left(\xt\right) -
f\left(\xstar\right)
\right)+ 4L\left(\sumunn D_{f_i}\left( \wtm,\xstar\right) -
\espt{\sumunn D_{f_i}\left(\wt ,\xstar\right)}\right).
    \end{align*}
Summing in $t$ and taking the expectation gives
\begin{align}
    \esp{\sumunT \esptnd{\gt}} & \leq \esp{8L\sumunT\left(
    f\left(\xt\right) - f\left(\xstar\right) \right)  + 4L \sumunT\left(\sumunn
D_{f_i}\left(\wtm, \xstar\right) - \espt{\sumunn
D_{f_i}\left(\wt, \xstar\right)}\right)} \nonumber\\
    & = \esp{8L\sumunT\left( f\left(\xt\right) - f\left(\xstar\right) \right)
    + 4L \sumunT\left(\sumunn D_{f_i}\left(\wtm,\xstar\right) -
\sumunn D_{f_i}\left(\wt,\xstar\right)\right)} \nonumber\\
    &\label{eq:linkgtDeltat} \leq \esp{8L\sumunT\left( f\left(\xt\right) -
    f\left(\xstar\right) \right)}  + 4L\underbrace{\sumunn D_{f_i}\left(
    \tilde{x}^{(0)},\xstar\right)}_{= n\left(f\left(x^{(1)} \right) -
f\left(\xstar\right)\right)},
\end{align}
hence the result.\end{proof}

\AdaLVR*

\begin{proof}
    We set $\Delta_T \triangleq \sumunT \esp{ f\left(\xt\right) -
    f\left(\xstar\right) }$
  and $\gamma \triangleq \alpha \left( \eta + \frac{D^2} \eta \right)$ (where $\alpha=1$
  (resp.\ $\alpha=\sqrt{d}$) in
  the case of the Norm version (resp.\ the Diagonal version)). Using the convexity
  of $f$, Lemmas~\ref{lem:convenientBoundAdaGrad} and
  \ref{lem:SAGAunbiasedness}, and Proposition~\ref{prop:adatele},
\begin{align*}
\Delta_T&\leq \mathbb{E}\left[\sum_{t=1}^T \left< \nabla f(x^{(t)}), x^{(t)}-x^* \right>  \right] =\mathbb{E}\left[ \sum_{t=1}^T\mathbb{E}_t\left[ \left< g^{(t)}, x^{(t)}-x^* \right>  \right]  \right] \\
  &\leq \gamma\mathbb{E}\left[ \sqrt{\sum_{t=1}^T\dln g^{(t)} \drn^2} \right]\leq \gamma\sqrt{\mathbb{E}\left[ \sum_{t=1}^T\left\| g^{(t)} \right\|^2 \right] } \\
  &\leq \gamma\sqrt{\mathbb{E}\left[ 8L\sum_{t=1}^T\left( f(x^{(t)})-f(x^*) \right) \right]+4Ln\left( f(x^{(1)})-f(x^*) \right) }\\
  &=\gamma\sqrt{L\left( 8\Delta_T+4n(f(x^{(1)})-f(x^*)) \right)}.
\end{align*}
Using \Cref{lem:simple_inequality} gives
\[ \Delta_T \leq 8L \gamma^2+ \gamma\sqrt{4L
\left(f\left(x^{(1)}\right) - f\left(\xstar\right) \right)}. \]
By convexity, $\Delta_T\geq T \mathbb{E}\left[ f(\barre{x}_T)-f(x^*) \right]$
and the result follows.
\end{proof}

\section{Helper Lemmas }\label{sec:helperLemmas}
The Bregman divergence associated with a differentiable function
  $g:\mathbb{R}^d\to \mathbb{R}$ is defined as
  \[ D_g(y,x)=g(y)-g(x)-\left< \nabla g(x), y-x \right> ,\qquad x,y\in \mathbb{R}^d. \]
When $g$ is convex, the Bregman divergence is always nonnegative.
\begin{Lemma}[\parencite{Nesterov} Theorem 2.1.5] \label{lem:Lsmoothconseq}

If $g$ is an $L$-smooth convex  function

\begin{equation*}
    \frac 1 {2L} \left \| \nabla g(x) - \nabla g(y) \right\|^2  \leq D_{g}(y,x), \quad x,y \in \mR^d.
\end{equation*}
\end{Lemma}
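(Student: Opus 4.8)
The plan is to combine the two hypotheses in the classical way: convexity guarantees that a stationary point is a global minimizer, while $L$-smoothness furnishes a quadratic upper bound on $g$. I would apply both to a \emph{linearly shifted} copy of $g$ chosen so that its minimizer sits exactly at $x$, after which the desired inequality falls out by unfolding definitions.

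First I would fix $x,y \in \mR^d$ and introduce the auxiliary function $\phi(z) = g(z) - \ps{\nabla g(x)}{z}$. Subtracting a linear term preserves both convexity and $L$-smoothness, and since $\nabla \phi(z) = \nabla g(z) - \nabla g(x)$ we have $\nabla \phi(x) = 0$; by convexity this makes $x$ a global minimizer of $\phi$. Next I would invoke the descent inequality characterizing $L$-smoothness,
\[ \phi(z) \leq \phi(w) + \ps{\nabla \phi(w)}{z - w} + \frac L 2 \dln z - w \drn^2, \qquad z,w \in \mR^d, \]
and minimize its right-hand side over $z$ with $w = y$ held fixed (the minimizer being $z = y - \tfrac 1 L \nabla \phi(y)$), which yields $\min_z \phi(z) \leq \phi(y) - \tfrac 1 {2L} \dln \nabla \phi(y) \drn^2$.

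Finally, since $x$ minimizes $\phi$, the left-hand side equals $\phi(x)$, so $\phi(x) \leq \phi(y) - \tfrac 1 {2L} \dln \nabla \phi(y) \drn^2$. Unfolding $\phi(x) = g(x) - \ps{\nabla g(x)}{x}$, $\phi(y) = g(y) - \ps{\nabla g(x)}{y}$ and $\nabla \phi(y) = \nabla g(y) - \nabla g(x)$, and rearranging, gives
\[ \frac 1 {2L} \dln \nabla g(x) - \nabla g(y) \drn^2 \leq g(y) - g(x) - \ps{\nabla g(x)}{y - x} = D_g(y,x), \]
which is exactly the claim. The hard part is really the single step of minimizing the quadratic upper bound: it genuinely needs \emph{both} ingredients, $L$-smoothness to produce the upper bound and convexity to guarantee that the critical point $x$ is a \emph{global} minimum rather than a mere stationary point. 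I would also take care to justify that the descent inequality I rely on is itself a legitimate consequence of the $L$-Lipschitz continuity of $\nabla g$ (obtained by integrating $\nabla g$ along the segment joining $w$ and $z$), so as not to silently assume a variant of the very statement being proved.
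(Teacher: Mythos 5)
Your proof is correct and complete. The paper itself gives no proof of this lemma---it is quoted directly from the cited reference (Nesterov, Theorem 2.1.5)---and your argument (subtracting the linear term $\ps{\nabla g(x)}{z}$ so that $x$ becomes a global minimizer of the shifted function, then minimizing the quadratic upper bound furnished by $L$-smoothness) is precisely the classical proof given in that reference, with the right care taken that the descent inequality itself follows from Lipschitz continuity of the gradient rather than from the statement being proved.
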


\begin{Corollary} \label{coro:normGradfLeqf}
Under \Cref{ass:convex-smooth,ass:minimum},
$$ \dln \nabla f(x) \drn \leq 2L\left( f(x) - f\left(\xstar\right)\right), \quad x \in \mR^d. $$
\end{Corollary}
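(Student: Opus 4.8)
The plan is to reduce the whole claim to a single application of the smoothness–convexity inequality in \Cref{lem:Lsmoothconseq}, once we notice that the global minimizer has vanishing gradient. First I would record two elementary facts. Since $f$ is an average of convex $L$-smooth functions, $f$ is itself convex and $L$-smooth, so \Cref{lem:Lsmoothconseq} applies with $g=f$. Moreover, \Cref{ass:minimum} asserts that $\xstar$ minimizes $f$ over all of $\mR^d$ (and not merely over $X$), so the first-order optimality condition gives $\nabla f(\xstar)=0$.

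With these two facts in hand, I would apply \Cref{lem:Lsmoothconseq} with the argument pair chosen so that the Bregman divergence collapses: taking the roles $y=x$ and $x=\xstar$ in the statement of the lemma yields
$$\frac{1}{2L}\dln \nabla f(x) - \nabla f(\xstar) \drn^2 \leq D_f(x,\xstar) = f(x) - f(\xstar) - \ps{\nabla f(\xstar)}{x - \xstar}.$$
Substituting $\nabla f(\xstar)=0$ on both sides erases the inner-product term on the right and leaves only $\nabla f(x)$ on the left, giving $\tfrac{1}{2L}\dln \nabla f(x)\drn^2 \leq f(x) - f(\xstar)$. Multiplying through by $2L$ then produces the claimed bound (with the squared norm on the left-hand side).

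There is essentially no hard step here: the computation is a one-line specialization of \Cref{lem:Lsmoothconseq}. The only point deserving care is the use of $\nabla f(\xstar)=0$, which is precisely why \Cref{ass:minimum} is stated as a minimum over the whole space $\mR^d$ rather than just over the constraint set $X$. If $\xstar$ were only a minimizer relative to $X$, its gradient need not vanish, the divergence $D_f(x,\xstar)$ would retain the term $\ps{\nabla f(\xstar)}{x - \xstar}$, and the clean identity $D_f(x,\xstar)=f(x)-f(\xstar)$ would fail.
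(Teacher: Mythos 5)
Your proof is correct and takes essentially the same route as the paper's: both apply \Cref{lem:Lsmoothconseq} with $g=f$ at the pair $(x,\xstar)$ and use $\nabla f(\xstar)=0$ (valid because \Cref{ass:minimum} makes $\xstar$ an unconstrained minimizer) to collapse the Bregman divergence to $f(x)-f(\xstar)$. As you note, what this argument actually yields is the squared-norm bound $\dln \nabla f(x)\drn^2 \leq 2L\left(f(x)-f\left(\xstar\right)\right)$; the absence of the square in the stated corollary (and in the paper's one-line proof) is a typo, and the squared version is the one used elsewhere in the paper.
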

\begin{proof}
Let $x \in \mR^d$. Using \Cref{lem:Lsmoothconseq} gives
$$  \dln \nabla f(x) \drn =  \dln \nabla f(x) - \nabla f\left(\xstar\right) \drn \leq 2L D_f(x,\xstar) = 2L \left( f(x) - f\left(\xstar\right) \right). $$
\end{proof}

\begin{Corollary}\label{coro:normDiffTat}
    For \Cref{alg:AdaLVR} and under
    \Cref{ass:convex-smooth,ass:minimum}, for all $1 \leq t \leq T$, we have
\begin{equation*}
\esptnd{\nabla f_{i(t)}\left(\xt\right) -  \nabla f_{i(t)}\left(\xstar\right)} \leq 2L\left( f\left(\xt\right) - f\left(\xstar\right)\right).
\end{equation*}
\end{Corollary}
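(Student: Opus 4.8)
The plan is to condition on the history $\mathcal{F}_t$ generated by the algorithm up to the beginning of iteration $t$. The key structural observation is that $\xt$ is $\mathcal{F}_t$-measurable (it is computed before the fresh index is drawn), whereas $i(t)$ is sampled uniformly on $\{1,\dots,n\}$ independently of $\mathcal{F}_t$. Consequently the conditional expectation on the left-hand side collapses to the uniform average over the summands, namely $\esptnd{\nabla f_{i(t)}\left(\xt\right) - \nabla f_{i(t)}\left(\xstar\right)} = \meann \dln \nabla f_i\left(\xt\right) - \nabla f_i\left(\xstar\right) \drn^2$.

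Next I would bound each individual summand. Since every $f_i$ is convex and $L$-smooth by \Cref{ass:convex-smooth}, applying \Cref{lem:Lsmoothconseq} to $g = f_i$ at the points $x = \xstar$ and $y = \xt$ yields $\dln \nabla f_i\left(\xt\right) - \nabla f_i\left(\xstar\right) \drn^2 \leq 2L\, D_{f_i}\left(\xt, \xstar\right)$. Summing this over $i$ and dividing by $n$, and using that the Bregman divergence is linear in the underlying function (so that $\meann D_{f_i}\left(\xt,\xstar\right) = D_f\left(\xt,\xstar\right)$), I obtain the intermediate bound $\meann \dln \nabla f_i\left(\xt\right) - \nabla f_i\left(\xstar\right) \drn^2 \leq 2L\, D_f\left(\xt,\xstar\right)$.

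Finally I would invoke \Cref{ass:minimum}: because $\xstar$ minimizes $f$ over all of $\mR^d$ (and not merely over $X$), the first-order optimality condition forces $\nabla f\left(\xstar\right) = 0$. Expanding the definition of the Bregman divergence then gives $D_f\left(\xt,\xstar\right) = f\left(\xt\right) - f\left(\xstar\right) - \ps{\nabla f\left(\xstar\right)}{\xt - \xstar} = f\left(\xt\right) - f\left(\xstar\right)$, which is exactly the claimed inequality.

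The argument involves no genuine obstacle; the two points that deserve care are, first, recognizing that the conditional expectation reduces to the uniform average precisely because $\xt$ is frozen under $\mathcal{F}_t$ while $i(t)$ is the only fresh randomness, and second, that it is the minimization over the \emph{whole} space $\mR^d$ in \Cref{ass:minimum}, rather than over the constraint set $X$, that guarantees $\nabla f\left(\xstar\right) = 0$ and thereby annihilates the linear Bregman term.
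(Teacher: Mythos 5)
Your proof is correct and follows essentially the same route as the paper's: reduce the conditional expectation to the uniform average over $i$, apply \Cref{lem:Lsmoothconseq} to each $f_i$, and use the finite-sum structure together with $\nabla f\left(\xstar\right)=0$ to identify the averaged Bregman term with $f\left(\xt\right)-f\left(\xstar\right)$. The only cosmetic difference is that you sum the Bregman divergences into $D_f$ before expanding, while the paper expands each $D_{f_i}$ and then sums; these are trivially equivalent.
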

\begin{proof}
\begin{alignat*}{2}
    \esptnd{\nabla f_{i(t)}\left(\xt\right) -  \nabla f_{i(t)}\left(\xstar\right)} &
    = \frac {2L}{2L} \frac 1 n  \sumunn \dln \nabla f_{i}\left(\xt\right) -  \nabla f_{i}\left(\xstar\right) \drn^2  \\
    & \leq \frac {2L} n \sumunn D_{f_i}(\xt,\xstar) && \textrm{ (\Cref{lem:Lsmoothconseq})} \\
    & = \frac {2L} n \sumunn \left( f_i \left(\xt\right) - f_i\left(\xstar\right) -\ps{\nabla f_i\left(\xstar\right)}{\xt - \xstar}\right) \\
    & = 2L\left( f\left(\xt\right) - f\left(\xstar\right) - 0 \right),
\end{alignat*}
where in the last line we used the finite-sum structure  of $f$ and $\nabla f\left(\xstar\right) = 0$.
\end{proof}

\begin{Lemma}\label{lem:normDiffPhiSAGA}
    For \Cref{alg:AdaLVR} with \ref{eq:saga} and under
    \Cref{ass:convex-smooth,ass:minimum}, for all $1 \leq t \leq T$, we have
\begin{align*}
    \frac 1 {2L}\esptnd{\nabla f_{i(t)} \left(\tilde x ^{(t-1)}_{i(t)}\right)
-  \nabla f_{i(t)}\left(\xstar\right)} \leq  f\left(\xt\right) -
f\left(\xstar\right) +\sumunn D_{f_i}\left(\tilde x _i^{(t-1)}, \xstar\right) -
\espt{\sumunn D_{f_i}\left(\tilde x _i^{(t)},\xstar\right)}.
\end{align*}
\end{Lemma}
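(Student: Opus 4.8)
The plan is to reduce the left-hand side to a per-function average by taking the conditional expectation over the uniformly sampled index $i(t)$, bound that average through the smoothness consequence \Cref{lem:Lsmoothconseq}, and then recognise the average as the exact expected decrease of the SAGA memory table. Conditioning on the past, the only randomness at step $t$ is $i(t)\sim\text{Unif}\{1,\dots,n\}$, so
\[ \esptnd{\nabla f_{i(t)}\left(\tildextmri\right) - \nabla f_{i(t)}\left(\xstar\right)} = \meann \dln \nabla f_i\left(\tildextmi\right) - \nabla f_i\left(\xstar\right) \drn^2. \]
Dividing by $2L$ and applying \Cref{lem:Lsmoothconseq} to each $f_i$ with the pair $\left(\tildextmi,\xstar\right)$ bounds the left-hand side of the claim by $\meann D_{f_i}\left(\tildextmi,\xstar\right)$.

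Next I would compute the expected table energy $\espt{\sumunn D_{f_i}\left(\tildexti,\xstar\right)}$. Since the SAGA update sets $\tildexti = \xt$ exactly when $i=i(t)$ and leaves $\tildexti = \tildextmi$ otherwise, separating the single refreshed coordinate from the $n-1$ untouched ones and averaging over $i(t)$ gives
\[ \espt{\sumunn D_{f_i}\left(\tildexti,\xstar\right)} = \left(1 - \tfrac1n\right)\sumunn D_{f_i}\left(\tildextmi,\xstar\right) + \meann D_{f_i}\left(\xt,\xstar\right). \]
The last average simplifies via the finite-sum structure $f=\meann f_i$ together with $\nabla f\left(\xstar\right)=0$, which follows from \Cref{ass:minimum} because $\xstar$ minimises $f$ over all of $\mR^d$; the linear term in the Bregman divergence then vanishes and $\meann D_{f_i}\left(\xt,\xstar\right) = f\left(\xt\right)-f\left(\xstar\right)$.

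Finally I would rearrange the identity above into
\[ \meann D_{f_i}\left(\tildextmi,\xstar\right) = f\left(\xt\right)-f\left(\xstar\right) + \sumunn D_{f_i}\left(\tildextmi,\xstar\right) - \espt{\sumunn D_{f_i}\left(\tildexti,\xstar\right)}, \]
and chain it with the smoothness bound of the first step to obtain exactly the stated inequality. The only delicate point is the bookkeeping of the conditional expectation of the table update—correctly isolating the refreshed index $i(t)$ so that the $1/n$ weight and the residual $(1-1/n)$ weight appear—and invoking $\nabla f\left(\xstar\right)=0$ to kill the gradient term; the rest is a termwise use of \Cref{lem:Lsmoothconseq}. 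I note that the analogous L-SVRG statement will follow identically, with the common anchor $\wt$ replacing the per-coordinate anchors $\tildexti$ and the refresh probability $p$ playing the role of $1/n$.
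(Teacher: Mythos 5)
Your proposal is correct and follows essentially the same route as the paper's own proof: bounding the conditional expectation by $\meann D_{f_i}\bigl(\tildextmi,\xstar\bigr)$ via \Cref{lem:Lsmoothconseq}, computing the expected post-update table energy by isolating the refreshed index with weights $1/n$ and $1-1/n$, and using $\nabla f\left(\xstar\right)=0$ to identify $\meann D_{f_i}\bigl(\xt,\xstar\bigr)$ with $f\left(\xt\right)-f\left(\xstar\right)$ before rearranging. No gaps to report.
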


\begin{proof}
First, from \Cref{lem:Lsmoothconseq}
$$\frac 1 {2L} \espt{\dln \nabla f_{i(t)}\left(\tilde x ^{(t-1)}_{i(t)}\right) -
\nabla f_{i(t)}\left(\xstar\right) \drn^2}  \leq
\espt{D_{f_{i(t)}}\left(\tilde x_{i(t)}^{(t-1)},\xstar\right)} = \frac 1 n
\sumunn D_{f_i}\left(\tilde x _i^{(t-1)},\xstar\right). $$
By definition, for all $1\leq i\leq n$,  $\tilde x ^{(t)}_i|\xt = \left\{
    \begin{array}{ll}
 x ^{(t)}  &\mbox{with probability } 1/n \\
\tilde x ^{(t-1)}_i& \mbox{with probability } 1-1/n
\end{array} \right.$.
Therefore,
\begin{align*}
    \espt{\sumunn D_{f_i}\left(\tilde x_i^{(t)},\xstar \right)} =   \frac 1 n
    \sumunn  D_{f_i}(\xt,\xstar) + \frac {n-1}n \sumunn D_{f_i}\left(
    \tilde x _i^{(t-1)},\xstar\right).
\end{align*}
Reorganizing the terms and using $$ \frac 1 n \sumunn  D_{f_i}(\xt, \xstar) = \frac 1 n \sumunn \left( f_i\left(\xt\right) - f_i\left(\xstar\right) - \ps{\nabla f_{i}\left(\xstar\right)}{\xt - \xstar}\right) = f\left(\xt\right) - f\left(\xstar\right) - 0$$
gives
$$\frac 1 n \sumunn D_{f_i}\left(\tilde x_i^{(t-1)},\xstar\right) =
f\left(\xt\right) - f\left(\xstar\right) + \sumunn D_{f_i}\left(\tilde
x _i^{(t-1)},\xstar\right) - \espt{\sumunn D_{f_i}\left(\tilde x
_i^{(t)},\xstar\right)},$$
hence the result.
\end{proof}

\begin{Lemma}\label{lem:normDiffPhiLSVRG}
    For \Cref{alg:AdaLVR} with \ref{eq:LSVRG} and under
    \Cref{ass:convex-smooth,ass:minimum}, for all $1 \leq t \leq T$, we have
\begin{align*}
    \frac 1 {2L}\esptnd{\nabla f_{i(t)} \left(\wtm\right)
-  \nabla f_{i(t)}\left(\xstar\right)} \leq  f\left(\xt\right) -
f\left(\xstar\right) +\sumunn D_{f_i}\left(\wtm,\xstar\right) -
\espt{\sumunn D_{f_i}\left(\wt ,\xstar\right)}.
\end{align*}
\end{Lemma}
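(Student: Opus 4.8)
The plan is to follow the same two-step template as the proof of Lemma~\ref{lem:normDiffPhiSAGA}, adapted to the L-SVRG anchor $\wt$, which is a \emph{single} point rather than a family indexed by $i$. The two ingredients are the smoothness consequence of Lemma~\ref{lem:Lsmoothconseq} and the explicit law governing the anchor refresh under the L-SVRG update with $p=1/n$.

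First I would bound the left-hand side by a Bregman potential. Applying Lemma~\ref{lem:Lsmoothconseq} to each $f_i$ and averaging over the uniformly drawn index $i(t)$—using that the anchor $\wtm$ is common to every $i$—gives
\[
\frac 1 {2L}\esptnd{\nabla f_{i(t)}\left(\wtm\right) - \nabla f_{i(t)}\left(\xstar\right)} \leq \espt{D_{f_{i(t)}}\left(\wtm, \xstar\right)} = \frac 1 n \sumunn D_{f_i}\left(\wtm, \xstar\right).
\]

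Next I would express $\frac 1 n \sumunn D_{f_i}(\wtm,\xstar)$ as the desired telescoping difference. By the update rule, the anchor satisfies $\wt = \xt$ with probability $1/n$ and $\wt = \wtm$ with probability $1-1/n$, so
\[
\espt{\sumunn D_{f_i}\left(\wt, \xstar\right)} = \frac 1 n \sumunn D_{f_i}\left(\xt, \xstar\right) + \frac {n-1} n \sumunn D_{f_i}\left(\wtm, \xstar\right).
\]
Invoking the finite-sum identity $\frac 1 n \sumunn D_{f_i}(\xt,\xstar) = f(\xt) - f(\xstar)$ (valid since $\nabla f(\xstar)=0$) and rearranging isolates $\frac 1 n \sumunn D_{f_i}(\wtm,\xstar)$ as $f(\xt) - f(\xstar) + \sumunn D_{f_i}(\wtm,\xstar) - \espt{\sumunn D_{f_i}(\wt,\xstar)}$, which is exactly the right-hand side of the claim. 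Chaining this with the first display completes the argument.

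The only point requiring care is the bookkeeping of the conditional expectation $\espt{\cdot}$: it integrates jointly over the sampled index $i(t)$ and the Bernoulli $Z^{(t)}$ driving the anchor refresh, which are independent given the past, so the two computations decouple cleanly. Beyond this the proof is purely algebraic and presents no real obstacle; in fact the single-anchor structure of L-SVRG makes the rearrangement marginally simpler than the SAGA case, where $n$ separate anchors must be tracked.
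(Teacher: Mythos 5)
Your proposal is correct and follows essentially the same route as the paper's own proof: the same application of \Cref{lem:Lsmoothconseq} averaged over $i(t)$, the same computation of $\espt{\sumunn D_{f_i}(\wt,\xstar)}$ via the $1/n$ refresh probability, and the same rearrangement using $\frac 1 n \sumunn D_{f_i}(\xt,\xstar)=f(\xt)-f(\xstar)$. Your additional remark that $\espt{\cdot}$ integrates jointly over $i(t)$ and the independent Bernoulli $Z^{(t)}$ is a point the paper leaves implicit, and it is handled correctly.
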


\begin{proof}
First, from \Cref{lem:Lsmoothconseq},
$$\frac 1 {2L} \espt{\dln \nabla f_{i(t)}\left(\wtm\right) -
\nabla f_{i(t)}\left(\xstar\right) \drn^2}  \leq
\espt{D_{f_{i(t)}}\left(\wtm,\xstar\right)} = \frac 1 n
\sumunn D_{f_i}\left(\wtm,\xstar\right). $$
By definintion, $\wt |\xt = \left\{
    \begin{array}{ll}
 x ^{(t)}  &\mbox{with probability } 1/n \\
\wtm & \mbox{with probability } 1-1/n
\end{array} \right.,$
therefore,
\begin{align*}
    \espt{\sumunn D_{f_i}\left( \wt,\xstar \right)} =   \frac 1 n
    \sumunn  D_{f_i}(\xt,\xstar) + \frac {n-1}n \sumunn D_{f_i}\left(
    \wtm,\xstar\right).
\end{align*}
Reorganizing the terms and using $$ \frac 1 n \sumunn  D_{f_i}(\xt,\xstar) = \frac 1 n \sumunn \left( f_i\left(\xt\right) - f_i\left(\xstar\right) - \ps{\nabla f_{i}\left(\xstar\right)}{\xt - \xstar}\right) = f\left(\xt\right) - f\left(\xstar\right) - 0$$
gives
$$\frac 1 n \sumunn D_{f_i}\left(\wtm,\xstar\right) =
f\left(\xt\right) - f\left(\xstar\right) + \sumunn D_{f_i}\left(
\wtm,\xstar\right) - \espt{\sumunn D_{f_i}\left(\wt,\xstar\right)}.$$
\end{proof}

\begin{Lemma}[\parencite{DefazioThese} Lemma 4.5]\label{lem:normgtGen}
    Under \Cref{ass:convex-smooth,ass:minimum}. Let $t\geq 1$ and  let $\gt$ be
    defined as
    \[ \gt = \nabla f_{i(t)}\left(\xt \right)  -  \nabla f_{i(t)}
\left(\anchor_{i(t)}\right)  + \frac 1 n \sumunn \nabla f_{i}(\anchor), \]
where $\anchor_i \in \mathbb R^d \left(1 \leq i \leq n \right)$ and $i(t)|\xt \sim \text{Unif}\{ 1, \dots, n\}$. We have,
\[\esptnd{\gt} \leq 2 \esptnd{\nabla f_{i(t)}\left(\xt \right) -  \nabla
    f_{i(t)}\left(\xstar\right)
    }  + 2 \esptnd{\nabla f_{i(t)} \left(\anchor_{i(t)}\right)  -  \nabla
f_{i(t)}\left(\xstar\right)}.\]
\end{Lemma}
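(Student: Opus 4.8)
The plan is to rewrite $\gt$ as a ``signal'' term plus a conditionally centered ``noise'' term, and then apply the elementary inequality $\dln u+v\drn^2 \leq 2\dln u\drn^2 + 2\dln v\drn^2$ followed by the fact that a variance never exceeds the corresponding second moment. First I would insert $\pm\,\nabla f_{i(t)}(\xstar)$ into the definition of $\gt$ to obtain
\[ \gt = \underbrace{\left(\nabla f_{i(t)}(\xt) - \nabla f_{i(t)}(\xstar)\right)}_{a} - \underbrace{\left(\nabla f_{i(t)}(\anchor_{i(t)}) - \nabla f_{i(t)}(\xstar)\right)}_{b} + \underbrace{\meann \nabla f_i(\anchor_i)}_{c}, \]
where, denoting the three bracketed terms by $a$, $b$ and $c$, the quantities $a$ and $b$ depend on the fresh index $i(t)$ while $c$ is deterministic conditionally on the information available at step $t$.

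The key observation is that $c$ is precisely the conditional mean of $b$. Indeed, since $i(t)$ is drawn uniformly over $\{1,\dots,n\}$,
\[ \espt{b} = \meann \left(\nabla f_i(\anchor_i) - \nabla f_i(\xstar)\right) = c - \nabla f(\xstar) = c, \]
where I use the finite-sum structure of $f$ together with $\nabla f(\xstar) = 0$, which holds because $\xstar$ minimizes the differentiable convex function $f$ over $\mR^d$ by \Cref{ass:minimum}. Consequently $\gt = a - \left(b - \espt{b}\right)$, so $\gt$ splits into the signal $a$ and the conditionally zero-mean fluctuation $-\left(b - \espt{b}\right)$.

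It then remains to combine two elementary facts. Applying $\dln u+v\drn^2 \leq 2\dln u\drn^2 + 2\dln v\drn^2$ with $u = a$ and $v = -\left(b - \espt{b}\right)$ and taking $\espt{\cdot}$ yields $\esptnd{\gt} \leq 2\,\esptnd{a} + 2\,\espt{\dln b - \espt{b}\drn^2}$; then dropping the centering via $\espt{\dln b - \espt{b}\drn^2} = \esptnd{b} - \dln\espt{b}\drn^2 \leq \esptnd{b}$ and substituting $a$ and $b$ back gives exactly the claimed inequality. The only genuinely delicate point is recognizing that the full-gradient correction $c$ coincides with $\espt{b}$: this is what centers the fluctuation term, makes the cross term drop out, and produces the clean constant $2$ on both summands; everything else is routine.
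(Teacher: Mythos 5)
Your proposal is correct and follows essentially the same route as the paper's own proof: both insert $\pm\,\nabla f_{i(t)}\left(\xstar\right)$, recognize that the full-gradient correction equals the conditional mean of $\nabla f_{i(t)}\left(\anchor_{i(t)}\right)-\nabla f_{i(t)}\left(\xstar\right)$ (via $\nabla f\left(\xstar\right)=0$), and conclude with $\dln a+b\drn^2\leq 2\dln a\drn^2+2\dln b\drn^2$ together with the bound of a centered second moment by the uncentered one. No gaps; the argument matches the paper's step for step.
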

\begin{proof}
We give the proof for completeness. Using the definition of $g^{(t)}$,
\begin{align*}
    \esptnd{g^{(t)}} & = \mE_t \bigg[ \bigg\| \nabla f_{i(t)}\left(\xt \right)  -  \nabla f_{i(t)}
    \left(\anchor_{i(t)}\right)  + \frac 1 n \sumunn \nabla f_{i}\left(\anchor_i\right)\bigg \|\bigg] \\
     & = \mE_t \bigg[\bigg \| \nabla f_{i(t)}\left(\xt \right) -  \nabla f_{i(t)}\left(\xstar\right) -
         \bigg( \nabla f_{i(t)} \left(\anchor_{i(t)} \right) -  \nabla f_{i(t)}\left(\xstar\right) -   \frac 1
             n \displaystyle \sum_{i=1}^n \nabla f_{i}\left(\anchor_i\right)
\bigg)\bigg \|^2 \bigg]  \\
    & \leq 2 \esptnd{\nabla f_{i(t)}\left(\xt \right) -  \nabla f_{i(t)}\left(\xstar\right)
        } \\
        & + 2 \mE_t \Bigg[\bigg \| \underbrace{\nabla f_{i(t)} \left(\anchor_{i(t)} \right) -
    \nabla f_{i(t)}\left(\xstar\right)}_{\triangleq X} -   \bigg[\underbrace{\frac 1 n
\sumunn \nabla f_{i}\left(\anchor_i\right) - \overbrace{\nabla f\left(\xstar\right)}^{= 0}}_{= \mathbb{E}_t\left[ X \right] }\bigg] \bigg \|^2
\Bigg] \\
    & \leq 2 \esptnd{\nabla f_{i(t)}\left(\xt \right) -  \nabla f_{i(t)}\left(\xstar\right)
    }  + 2 \esptnd{\nabla f_{i(t)} \left(\anchor_{i(t)} \right)  -  \nabla f_{i(t)}\left(\xstar\right)}
\end{align*}
where we used basic inequalities $\dln a + b \drn^2 \leq 2\dln a\drn^2 + 2\dln b\drn^2$ and $\mE_t\left[ \dln X - \mE_t\left[
X\right]\drn^2  \right] \leq \mE_t \left[ \dln X\drn^2  \right]$.
\end{proof}


\bin{
This result does not imply convergence of the objective function in general since the numerator can be unbounded if we don't make further  assumptions on $\gt$ and $f$. Assuming $L$-smoothness of $f$,  bounded gradients in expectation and $\espt{\gt} = \nabla f\left(\xt\right)$ where $\espt{\,\cdot\,} = \esp{\,\cdot\,|x^{(t-1)}}$, leads to an $\mathcal O\left(\frac 1 T + \frac 1 {\sqrt{T}}\right)$ convergence rate (\Cref{thm:AdaGradConv})}

\begin{Lemma}\label{lem:pseudo-inverse}
Let $\left( \gt\right)_{t \geq 1}$ be a sequence in $\mR^d$. For $t\geq 1$, and let $A_t$ be defined as either
\[ A_t = \sqrt{\sum_{s = 1}^t  \dln g^{(s)} \drn^2}\in \mR\qquad \text{or}\qquad A_t = \sqrt{\sum_{s = 1}^t \diag\left( g^{(s)} g^{(s) \top}
\right)}\in \mR^{d \times d},  \]
where the square-root is to be understood component-wise.
Then, $A_t A_t^{-1} \gt = \gt$, where $A_t^{-1}$ denotes the Moore--Penrose pseudo-inverse
of $A_t$.

\end{Lemma}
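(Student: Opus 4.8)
The plan is to reduce the claim to the elementary behaviour of the Moore--Penrose pseudo-inverse on diagonal (and scalar) matrices, and then to exploit the fact that the only coordinates on which $A_t$ degenerates are coordinates on which $\gt$ already vanishes. The starting observation is that for a diagonal matrix $D = \diag(d_1, \dots, d_d)$ with nonnegative entries, the pseudo-inverse is again diagonal, $D^{-1} = \diag(d_1^+, \dots, d_d^+)$, where $d_j^+ = 1/d_j$ if $d_j \neq 0$ and $d_j^+ = 0$ otherwise. Consequently $D D^{-1}$ is the diagonal matrix whose $j$-th entry equals $1$ when $d_j \neq 0$ and $0$ when $d_j = 0$; that is, $D D^{-1}$ is the orthogonal projection onto the span of the basis vectors $e_j$ for which $d_j \neq 0$. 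Hence, for any $v \in \mR^d$, one has $(D D^{-1} v)_j = v_j$ whenever $d_j \neq 0$ and $(D D^{-1} v)_j = 0$ whenever $d_j = 0$, so that $D D^{-1} v = v$ holds precisely when $v_j = 0$ on every coordinate $j$ for which $d_j = 0$.

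Next I would apply this criterion with $D = A_t$ in the AdaGrad-Diagonal case, whose $j$-th diagonal entry is $(A_t)_{jj} = \bigl(\sum_{s=1}^t (g^{(s)}_j)^2\bigr)^{1/2}$, together with $v = \gt$. The key point is that $(A_t)_{jj} = 0$ forces $\sum_{s=1}^t (g^{(s)}_j)^2 = 0$, whence $g^{(s)}_j = 0$ for every $s \leq t$, and in particular $g^{(t)}_j = 0$. Thus $\gt$ vanishes on exactly the coordinates where $A_t$ does, and the displayed criterion immediately yields $A_t A_t^{-1} \gt = \gt$. For the AdaGrad-Norm (scalar) case I would argue identically, viewing $A_t = \bigl(\sum_{s=1}^t \dln g^{(s)} \drn^2\bigr)^{1/2}$ as a $1 \times 1$ matrix: if $A_t > 0$ then $A_t A_t^{-1} = 1$ and the claim is trivial, while if $A_t = 0$ then every $g^{(s)}$ with $s \leq t$ vanishes, so $\gt = 0$ and both sides equal zero.

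There is no genuine obstacle in this argument; the only point that deserves explicit care is the degenerate case $A_t = 0$ (equivalently, a vanishing diagonal entry of $A_t$), where one must invoke the convention $A_0 = 0$ and the definition of the pseudo-inverse on singular matrices rather than naively dividing by zero. Once that case is dispatched by the observation that a null (diagonal entry of the) accumulator forces the corresponding component of $\gt$ to be null as well, the identity $A_t A_t^{-1} \gt = \gt$ follows in all cases.
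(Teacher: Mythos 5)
Your proposal is correct and follows essentially the same route as the paper: both arguments reduce to the coordinatewise action of the pseudo-inverse of a diagonal (or scalar) matrix and hinge on the observation that any coordinate where $A_t$ vanishes forces the corresponding coordinate of $g^{(t)}$ to vanish, the paper merely phrasing this via the contrapositive case split ($g^{(t)}_k \neq 0 \Rightarrow A_{t,kk} > 0$). The only superfluous remark is your appeal to the convention $A_0 = 0$, which plays no role here since the lemma concerns $t \geq 1$.
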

\begin{proof}
Let $t \geq 1$.
For the scalar case, if $\gt = 0 $ the identity holds. If $\gt \neq 0$, then $A_t>0$ so that $A_t A_t^{-1} =1$ and the identity holds.

For the diagonal case, let $1 \leq k \leq d$. We denote $B_{kk}$ the
$k^{\text{th}}$ diagonal term of a matrix $B \in \mathbb R^{d \times d}$. By definition of the
Moore--Penrose pseudo-inverse, since $A_t$ is diagonal,
\[ \left(A_t^{-1}\right)_{kk} = \begin{cases} \left(A_{t,kk}\right)^{-1} &\text{ if } A_{t,kk}>0\\ 0 &\text{if } A_{t,kk}=0
\end{cases}.  \]
Because $A_t$ is diagonal,
\[ \left(A_tA_t^ {-1} \gt\right)_{k} = \left(A_t^{-1}\right)_{kk}A_{t,kk} \gt_k. \]
If $\gt_k = 0$, then $\left(A_t^{-1}\right)_{k}A_{t,kk} \gt_k = 0.$ \\
If $\gt_k >0$, using the definition of $A_t$ gives $A_{t,kk}>0$ so that
 $$\left(A_t^{-1}\right)_{kk} A_{t,kk} = 1$$
 and the identity is satisfied.

\bin{$$ \left(A_t^ {-1}A_t \gt\right)_{i} = \left(A_t^ {-1}A_t\right)_{ii} \gt_i = \left(A_t^{-1}\right)_{ii}A_{t,ii} \gt_i =  \left(A_tA_t^{-1}\right)_{ii} \gt_i  = \left(A_tA_t^ {-1} \gt\right)_{i} = 0   $$
}
\end{proof}
The following lemma is adapted from \cite[Theorem 7]{Adagrad_article}.
\begin{Lemma}\label{lem:Lemma3}
Let $T\geq 1$ and $x\in X$. Then, for both AdaGrad-Norm and
AdaGrad-Diagonal as defined in \eqref{eq:adagrad}, it holds that
\[ \sum_{t=1}^T \dln \xt - x\drn^2_{A_t -A_{t-1}} \leq D^2 \Tr(A_T). \]
\end{Lemma}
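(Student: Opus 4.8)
The plan is to bound each summand separately and then exploit a telescoping of the traces. First I would record that the sequence $(A_t)$ is monotone: in the scalar (Norm) case the quantity $\sum_{s=1}^t \dln g^{(s)}\drn^2$ under the square root increases with $t$, while in the Diagonal case each diagonal entry $\sqrt{\sum_{s=1}^t \big(g^{(s)}_k\big)^2}$ is nondecreasing in $t$. Consequently $A_t - A_{t-1}$ is a nonnegative scalar (resp.\ a diagonal matrix with nonnegative entries), hence positive semidefinite, so that each Mahalanobis term $\dln \xt - x \drn^2_{A_t - A_{t-1}}$ is well defined and nonnegative.

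The crux is the pointwise bound $\dln v \drn^2_{A_t - A_{t-1}} \leq \Tr(A_t - A_{t-1})\, \dln v\drn^2$ for every $v \in \mR^d$. In the Diagonal case, writing $m_k \geq 0$ for the diagonal entries of $A_t - A_{t-1}$, one has $\ps{v}{(A_t - A_{t-1})v} = \sum_{k=1}^d m_k v_k^2 \leq \big(\sum_{k=1}^d m_k\big) \dln v\drn^2 = \Tr(A_t - A_{t-1})\,\dln v\drn^2$, using $v_k^2 \leq \dln v\drn^2$ coordinatewise; the Norm case is the same statement for a single scalar, with $A_t - A_{t-1}$ playing the role of its own trace. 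Taking $v = \xt - x$ and using that $\xt, x \in X$ forces $\dln \xt - x\drn \leq D$, I obtain $\dln \xt - x \drn^2_{A_t - A_{t-1}} \leq D^2\, \Tr(A_t - A_{t-1})$.

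It then remains to sum over $t = 1, \dots, T$ and telescope: $\sumunT \Tr(A_t - A_{t-1}) = \Tr(A_T) - \Tr(A_0) = \Tr(A_T)$, since $A_0 = 0$ by convention. This yields exactly $\sumunT \dln \xt - x\drn^2_{A_t - A_{t-1}} \leq D^2\, \Tr(A_T)$, the claimed inequality.

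No step is genuinely difficult here. The only points deserving care are the monotonicity of $(A_t)$, which guarantees that the increments are positive semidefinite so the seminorms make sense, and the elementary inequality $\ps{v}{Mv} \leq \Tr(M)\dln v\drn^2$ for a positive semidefinite diagonal $M$, which is immediate from the diagonal structure (and would in general follow from $\lambda_{\max}(M) \leq \Tr(M)$). The telescoping and the use of the diameter $D$ are routine.
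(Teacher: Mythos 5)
Your proof is correct and takes essentially the same route as the paper's: positive semidefinite increments $A_t - A_{t-1}$, the quadratic-form bound $\ps{v}{(A_t-A_{t-1})v} \leq \Tr(A_t - A_{t-1})\dln v \drn^2$ (which the paper obtains via $\lambda_{\max} \leq \Tr$ for PSD matrices, a step you also mention), the diameter bound, and the telescoping of traces using $A_0=0$. The only cosmetic difference is that you verify the trace bound directly from the diagonal structure rather than citing the eigenvalue inequality.
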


\begin{proof}
Remember that $A_0 = 0$ by convention. For $1\leq t \leq T $, it holds that  $A_t - A_{t-1}\succeq 0$ by construction, so that

\begin{align*}
    \sumunT \dln\xt -x\drn^2_{A_t - A_{t-1}} & \leq  \sumunT \dln \xt - x\drn^2 \lambda_{\text{max}} \left(A_t- A_{t-1}\right) \\
    & \leq  \sumunT \dln \xt - x\drn^2 \Tr \left(A_t- A_{t-1}\right) \\
    &  \leq  \sumunT D^2 \Tr \left(A_t- A_{t-1}\right) \\
    & = D^2 \Tr(A_T).
\end{align*}
\end{proof}
The following lemma is adapted from \cite[Lemma 10]{Adagrad_article}.
\begin{Lemma}\label{lem:Lemma4}
Let $T\geq 1$. For both AdaGrad-Norm and AdaGrad-Diagonal as defined in \eqref{eq:adagrad}:
\[ \sumunT \dln g^{(t)} \drn^2_{A_t^{-1}} \leq 2 \Tr(A_T). \]
Moreover,
\[ \Tr(A_T) \leq \alpha \sqrt{\sumunT \dln \gt \drn^2} \]
where $\alpha=1$ (resp.\ $\alpha=\sqrt{d}$) for AdaGrad-Norm (resp.\ AdaGrad-Diagonal).
\end{Lemma}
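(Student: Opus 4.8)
The plan is to derive both inequalities from a single elementary scalar fact and then reduce the matrix statements to it coordinatewise. The key observation is that for any reals $0 \leq b \leq a$ with $a > 0$ one has $\frac{a^2 - b^2}{a} \leq 2(a - b)$, which rearranges to $(a-b)^2 \geq 0$; when $a = 0$ (forcing $b = 0$) both sides vanish. I would record this scalar inequality first.

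For the first inequality in the AdaGrad-Norm case, I would note that $A_t$ is the scalar $\sqrt{\sum_{s=1}^t \dln g^{(s)}\drn^2}$, so that $A_t^2 - A_{t-1}^2 = \dln \gt\drn^2$ and $\dln \gt\drn^2_{A_t^{-1}} = \dln\gt\drn^2/A_t$ whenever $A_t > 0$ (the term being $0$ otherwise, by \Cref{lem:pseudo-inverse}). Applying the scalar fact with $a = A_t$ and $b = A_{t-1}$ gives $\dln\gt\drn^2_{A_t^{-1}} \leq 2(A_t - A_{t-1})$, and summing over $t$ telescopes to $2(A_T - A_0) = 2A_T = 2\Tr(A_T)$. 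The diagonal case is identical up to working coordinatewise: writing $(A_t)_{kk} = \sqrt{\sum_{s=1}^t (g^{(s)}_k)^2}$, one has $\dln\gt\drn^2_{A_t^{-1}} = \sum_{k}(g^{(t)}_k)^2/(A_t)_{kk}$ where the sum effectively ranges over indices $k$ with $(A_t)_{kk} > 0$ (the others contributing $0$ through the pseudo-inverse), and the same per-coordinate inequality followed by a telescoping sum in $t$ and a sum over $k$ yields $2\sum_{k=1}^d (A_T)_{kk} = 2\Tr(A_T)$.

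For the second inequality, the Norm case is immediate: $\Tr(A_T) = A_T = \sqrt{\sum_{t=1}^T \dln\gt\drn^2}$, which is the claim with $\alpha = 1$. For the diagonal case I would expand $\Tr(A_T) = \sum_{k=1}^d \sqrt{\sum_{t=1}^T (g^{(t)}_k)^2}$ and apply the Cauchy--Schwarz inequality over the $d$ coordinates, namely $\sum_{k=1}^d \sqrt{b_k} \leq \sqrt{d}\,\sqrt{\sum_{k=1}^d b_k}$ with $b_k = \sum_{t=1}^T (g^{(t)}_k)^2$, to obtain $\sqrt{d}\,\sqrt{\sum_{k=1}^d \sum_{t=1}^T (g^{(t)}_k)^2} = \sqrt{d}\,\sqrt{\sum_{t=1}^T \dln\gt\drn^2}$, i.e.\ $\alpha = \sqrt d$.

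Since every step reduces to a one-line scalar inequality, there is no serious analytic difficulty; the only point requiring care is the degenerate regime where some accumulated coordinate $(A_t)_{kk}$ (or the scalar $A_t$) is zero. There $A_t$ is non-invertible, and I rely on \Cref{lem:pseudo-inverse} to guarantee that the corresponding term $\dln\gt\drn^2_{A_t^{-1}}$ vanishes, so the telescoping argument goes through unchanged without needing the usual $\varepsilon$-regularization of $A_t$.
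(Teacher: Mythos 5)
Your proof is correct, but for the first inequality it takes a genuinely different route from the paper's. The paper proceeds by induction on $T$: the base case is computed directly from the definition of the Moore--Penrose pseudo-inverse, and the inductive step writes $2\Tr(A_{T'-1}) = 2\Tr\bigl(\bigl(A_{T'}^2 - g^{(T')}g^{(T')\top}\bigr)^{1/2}\bigr)$ and invokes the matrix-trace inequality of Duchi et al.\ (their Lemma 8), namely $X \succeq Y \succeq 0 \implies 2\Tr\bigl((X-Y)^{1/2}\bigr) + \Tr\bigl(X^{-1/2}Y\bigr) \leq 2\Tr\bigl(X^{1/2}\bigr)$, applied with $X = A_{T'}^2$ and $Y = g^{(T')}g^{(T')\top}$. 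You instead exploit the fact that both variants treated here are scalar or diagonal, so the quadratic form $\| g^{(t)}\|^2_{A_t^{-1}}$ decouples coordinatewise, and each coordinate is controlled by the elementary inequality $(a^2-b^2)/a \leq 2(a-b)$ followed by telescoping in $t$. What each approach buys: yours is more elementary and self-contained (no induction, no appeal to the external matrix lemma), while the paper's argument is coordinate-free and would extend verbatim to full-matrix AdaGrad with $A_t = \bigl(\sum_{s=1}^t g^{(s)}g^{(s)\top}\bigr)^{1/2}$, where your coordinatewise decoupling is unavailable---generality that is, however, unused in this paper, since only the Norm and Diagonal variants are defined. Both proofs handle the non-invertible case through the pseudo-inverse rather than an $\varepsilon$-regularization; one small sharpening of your writeup is that the degenerate terms vanish not really ``by \Cref{lem:pseudo-inverse}'' but because $(A_t)_{kk}=0$ forces $g^{(t)}_k=0$ (the accumulation includes the current gradient) while $(A_t^{-1})_{kk}=0$ by the pseudo-inverse convention. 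For the second inequality your proof and the paper's are essentially identical: the paper applies Jensen's inequality to the eigenvalues of $G_T$ and you apply Cauchy--Schwarz to its diagonal entries, which is the same computation since $G_T$ is diagonal.
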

\begin{proof}
    We proceed by induction. For $T = 1$, for the scalar version we get
    \[ g^{(1)\top} A_1^{-1}g^{(1)}   = A_1^{-1} \dln g^{(1)} \drn ^2   = A_1^{-1} A_1^2   =A_1 A_1^{-1} A_1 = A_1, \]
    where the last equality follows from the definition of the Moore--Penrose inverse.
For the diagonal version, we similarly have
\begin{align*}
    g^{(1)^\top} A_1^{-1}g^{(1)}  & = \Tr\left(A_1^{-1} g^{(1)}g^{(1)^\top}\right) = \Tr\left(A_1^{-1} \diag\left(g^{(1)}g^{(1)^\top}\right)\right) \\
    & = \Tr\left(A_1^{-1} A_1^2\right) = \Tr\left(A_1A_1^{-1} A_1\right) = \Tr \left( A_1\right),
\end{align*}
where we used the fact that if $A$ is diagonal and $B$ is arbitrary, then
$\Tr(AB) = \Tr(A\diag(B))$. This proves the result for $T = 1$. Let
$T'\geqslant 1$ and suppose it true for $T=T'-1$. For both variants (seeing $A_t$ as a
matrix of size 1 in the case of the scalar variant), we have
\begin{align*}
\sum_{t=1}^{T'}g^{(t)\top} A_t^{-1} \gt & = \sum_{t =1}^{T'-1}   g^{(t)\top} A_t^{-1} \gt + g^{(T')\top} A_{T'}^{-1} g^{(T')}\\
        & \leq 2 \Tr\left(A_{T'-1}\right) + g^{(T')\top}A_{T'}^{-1} g^{(T')}\\
    & = 2 \Tr \left(\left(A_{T'}^2 - g^{(T')}g^{(T')\top}\right)^{\sfrac 12}\right) + \Tr\left( A_{T'}^{-1} g^{(T')}g^{(T')\top}\right) \\
    &  \leq 2 \Tr(A_{T'})
\end{align*}
where we used the recurrence hypothesis at the second line  and at the last line we used  \cite[Lemma 8]{Adagrad_article}:
$$ X \succeq Y \succeq 0 \implies 2 \operatorname{Tr}\left(\left(X-Y\right)^{1 / 2}\right)+\operatorname{Tr}\left(X^{-1 / 2} Y\right) \leq 2 \operatorname{Tr}\left(X^{1 / 2}\right),$$
finishing the induction step. For the bound on the trace, for the norm version we have
$$\Tr(A_T) = G_T^{\sfrac 1 2} = \sqrt{\sumunT \dln \gt\drn^2  } $$
For the diagonal version we use Jensen's inequality to get
\begin{align*}
    \operatorname{Tr}(A_T) & = \operatorname{Tr}\left(G_T^{\sfrac 12}\right) = \sum_{j = 1}^d \sqrt{\lambda_j\left(G_T\right)} \\
    & =d \sum_{j = 1}^d \frac 1 d \sqrt{\lambda_j\left(G_T\right)}  \leq  d \sqrt{\sum_{j = 1}^d \frac 1 d \lambda_j\left(G_T\right)} \\
    & = \sqrt{d\Tr\left(G_T\right)}
\end{align*}
where $\lambda_j(G_T)$ denotes the $j^{\text{th}}$ eigenvalue of $G_T$. Now,
$$\Tr\left(G_T\right) = \Tr\left(\sumunT \diag \left( g^{(t) \top} \gt \right)\right) = \sumunT \dln \gt\drn ^2,$$
ending the proof.
\end{proof}

\begin{Lemma}\label{lem:SAGAunbiasedness}
    Let $1 \leq t \leq T $, and $\gt$ be defined by
 \Cref{alg:AdaLVR}. We have
\[ \mathbb{E}_t\left[ g^{(t)} \right]=\nabla f(x^{(t)}).  \]
\end{Lemma}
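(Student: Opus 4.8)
The plan is to exploit the fact that, conditionally on the information available at the start of iteration $t$, the iterate $\xt$ and all anchor points are deterministic, so that the sole randomness entering $\gt$ comes from the uniformly drawn index $i(t)$. I would handle the SAGA and L-SVRG cases in turn, the argument being essentially identical in both.

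For SAGA, I would first observe that the full correction term $\meann \nabla f_i(\tildextmi)$ is measurable with respect to the past and therefore passes through $\espt{\cdot}$ unchanged. Since $i(t) \sim \text{Unif}\{1,\dots,n\}$ is independent of the past, taking the conditional expectation of the remaining two terms yields $\espt{\nabla f_{i(t)}(\xt)} = \meann \nabla f_i(\xt) = \nabla f(\xt)$ and $\espt{\nabla f_{i(t)}(\tildextmri)} = \meann \nabla f_i(\tildextmi)$. The second of these exactly cancels the correction term, leaving $\espt{\gt} = \nabla f(\xt)$.

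For L-SVRG, the same computation applies verbatim after replacing the per-index anchors $\tildextmi$ by the single anchor $\wtm$: the deterministic term $\nabla f(\wtm)$ survives the expectation, while $\espt{\nabla f_{i(t)}(\wtm)} = \meann \nabla f_i(\wtm) = \nabla f(\wtm)$ cancels it, again giving $\nabla f(\xt)$.

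The computation is elementary and I do not anticipate a genuine obstacle; the only subtlety worth flagging is the precise meaning of the conditional expectation $\espt{\cdot}$, namely that it averages over the fresh draw of $i(t)$ (and, in the L-SVRG case, the Bernoulli variable $Z^{(t)}$, which does not enter $\gt$) while treating $\xt$ and all anchors as fixed. This unbiasedness is exactly the property that justifies interpreting SAGA and L-SVRG as control-variate corrections of the naive stochastic gradient, and it is what makes the subsequent analysis go through.
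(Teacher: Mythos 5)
Your proposal is correct and follows essentially the same route as the paper: condition on the past, average the $i(t)$-dependent terms over the uniform distribution, and observe that $\mathbb{E}_t\bigl[\nabla f_{i(t)}(\tilde{x}^{(t-1)}_{i(t)})\bigr]$ (resp.\ $\mathbb{E}_t\bigl[\nabla f_{i(t)}(\tilde{x}^{(t-1)})\bigr]$) cancels the deterministic correction term, leaving $\nabla f(x^{(t)})$. The paper's proof is exactly this term-by-term computation for both the SAGA and L-SVRG estimators, so no comparison beyond that is needed.
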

\begin{proof}
In the case of SAGA-type gradient estimators,
\begin{align*} \espt{\gt} & = \espt{\nabla f_{i(t)}\left(\xt\right)  -  \nabla f_{i(t)} \left(\tilde x^{(t-1)}_{i(t)}\right)  + \meann \nabla f_i\left(\tilde x^{(t-1)}_i\right)} \\
& = \meann \nabla f_{i}\left(\xt\right)  - \meann \nabla f_{i} \left(\tilde x^{(t-1)}_{i}\right)  + \meann \nabla f_i\left(\tilde x^{(t-1)}_i\right) \\
& = \nabla f\left(\xt\right).
\end{align*}

Similarly, in the case of L-SVRG gradient estimators,

\begin{align*} \espt{\gt} & = \espt{\nabla f_{i(t)}\left(\xt\right)  -
    \nabla f_{i(t)} \left(\wtm\right)  + \meann \nabla
f_i\left(\wtm\right)} \\
& = \meann \nabla f_{i}\left(\xt\right)  - \meann \nabla f_{i}
\left(\wtm\right)  + \meann \nabla f_i\left(\wtm\right) \\
& = \nabla f\left(\xt\right).
\end{align*}

\end{proof}



\begin{Lemma}\label{lem:simple_inequality}
If $x^{2} \leq a(x+b)$ for $a \geq 0$ and $b \geq 0$
$$
x \leq \frac{1}{2}\left(\sqrt{a^{2}+4 a b}+a\right) \leq a+\sqrt{a b}.
$$
\end{Lemma}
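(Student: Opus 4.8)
The plan is to recognize the hypothesis $x^2 \leq a(x+b)$ as a quadratic inequality in $x$, read off the upper bound from its larger root, and then control that root by elementary manipulations.

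First I would rewrite the hypothesis as $x^2 - ax - ab \leq 0$, a monic quadratic in $x$ whose discriminant $a^2 + 4ab$ is nonnegative since $a, b \geq 0$. Its two real roots are $\tfrac{1}{2}\left(a \pm \sqrt{a^2 + 4ab}\right)$, and because the leading coefficient is positive, the quadratic is nonpositive exactly on the closed interval between these roots. Consequently, any $x$ satisfying the hypothesis lies at or below the larger root, that is,
\[ x \leq \frac{1}{2}\left(\sqrt{a^2 + 4ab} + a\right), \]
which is precisely the first claimed inequality.

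For the second inequality, I would bound the square root using subadditivity: for $u, v \geq 0$ one has $\sqrt{u + v} \leq \sqrt{u} + \sqrt{v}$, whence $\sqrt{a^2 + 4ab} \leq a + 2\sqrt{ab}$. Substituting this into the previous bound yields
\[ \frac{1}{2}\left(\sqrt{a^2 + 4ab} + a\right) \leq \frac{1}{2}\left(a + 2\sqrt{ab} + a\right) = a + \sqrt{ab}, \]
which closes the chain. Equivalently, one may square the target inequality $\sqrt{a^2 + 4ab} \leq a + 2\sqrt{ab}$ to reduce it to $0 \leq 4a\sqrt{ab}$, an immediate consequence of $a, b \geq 0$.

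I expect no genuine obstacle in this elementary argument; the only point deserving a moment's attention is the degenerate case $a = 0$, in which the hypothesis forces $x \leq 0$ while both bounds vanish, so the statement holds trivially. The subadditivity step already handles this case uniformly, so no separate case analysis is strictly required.
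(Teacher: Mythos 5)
Your proof is correct and follows essentially the same route as the paper's: both view the hypothesis as the quadratic inequality $x^{2}-ax-ab\leq 0$, bound $x$ by the larger root, and then apply the subadditivity of the square root ($\sqrt{u+v}\leq\sqrt{u}+\sqrt{v}$) to obtain the bound $a+\sqrt{ab}$. Your additional remarks (the explicit discriminant check and the $a=0$ degenerate case) are harmless refinements but not substantively different from the paper's argument.
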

\begin{proof}
The starting point is the quadratic inequality $x^{2}-a x-a b \leq 0$.
Let $r_{1} \leq r_{2}$ be the roots of the quadratic equality, the
inequality holds if $x \in\left[r_{1}, r_{2}\right]$. The upper bound
is then given by using $\sqrt{a+b} \leq \sqrt{a}+\sqrt{b}$
$$
r_{2}=\frac{a+\sqrt{a^{2}+4 a b}}{2} \leq \frac{a+\sqrt{a^{2}}+\sqrt{4 a b}}{2}=a+\sqrt{a b}.
$$
\end{proof}

\section*{Acknowledgements}
We express our gratitude to Julien Chiquet for his meticulous review of the
manuscript and his leadership in guiding project meetings.

\ifCLASSOPTIONcaptionsoff
  \newpage
\fi

\printbibliography

@inproceedings{SAGA_article,
  title={SAGA: {A} fast incremental gradient method with support for non-strongly convex composite objectives},
  author={Defazio, Aaron and Bach, Francis and Lacoste-Julien, Simon},
  booktitle={Advances in neural information processing systems},
  pages={1646--1654},
  year={2014}
}

@phdthesis{DefazioThese,
  doi = {10.48550/ARXIV.1510.02533},

  url = {https://arxiv.org/abs/1510.02533},

  author = {Defazio, Aaron},

  keywords = {Machine Learning (cs.LG), Machine Learning (stat.ML), FOS: Computer and information sciences, FOS: Computer and information sciences},

  title = {New optimisation methods for machine learning},

  publisher = {arXiv},

  year = {2015},

  copyright = {arXiv.org perpetual, non-exclusive license}
}

@inproceedings{defazio2014finito,
  title={Finito: {A} faster, permutable incremental gradient method for big data problems},
  author={Defazio, Aaron and Domke, Justin and others},
  booktitle={International Conference on Machine Learning},
  pages={1125--1133},
  year={2014},
  organization={PMLR}
}

@inproceedings{allen2016improved,
  title={Improved {SVRG} for non-strongly-convex or sum-of-non-convex objectives},
  author={Allen-Zhu, Zeyuan and Yuan, Yang},
  booktitle={International conference on machine learning},
  pages={1080--1089},
  year={2016},
  organization={PMLR}
}

@inproceedings{nguyen2017sarah,
  title={{SARAH}: {A} novel method for machine learning problems using stochastic recursive gradient},
  author={Nguyen, Lam M and Liu, Jie and Scheinberg, Katya and Tak{\'a}{\v{c}}, Martin},
  booktitle={International Conference on Machine Learning},
  pages={2613--2621},
  year={2017},
  organization={PMLR}
}

@inproceedings{mairal2013optimization,
  title={Optimization with first-order surrogate functions},
  author={Mairal, Julien},
  booktitle={International Conference on Machine Learning},
  pages={783--791},
  year={2013},
  organization={PMLR}
}

@inproceedings{johnson2013accelerating,
  title={Accelerating stochastic gradient descent using predictive variance reduction},
  author={Johnson, Rie and Zhang, Tong},
  booktitle={Advances in neural information processing systems},
  pages={315--323},
  year={2013}
}

@article{Adagrad_article,
  author = {Duchi, John and Hazan, Elad and Singer, Yoram},
  journal = {Journal of Machine Learning Research},
  title = {Adaptive subgradient methods for online learning and stochastic optimization},
  volume = 12,
  year = 2011
}

@inproceedings{Nesterov,
  title={Introductory lectures on convex optimization - A basic course},
  author={Yurii Nesterov},
  booktitle={Applied Optimization},
  year={2004}
}

@misc{Varag,
title={A unified variance-reduced accelerated gradient method for convex optimization},
author={Guanghui Lan and Zhize Li and Yi Zhou},
year={2019},
eprint={1905.12412},
archivePrefix={arXiv},
primaryClass={math.OC}
}

@article{song2020variance,
  title={Variance reduction via accelerated dual averaging for finite-sum optimization},
  author={Song, Chaobing and Jiang, Yong and Ma, Yi},
  journal={Advances in Neural Information Processing Systems},
  volume={33},
  pages={833--844},
  year={2020}
}

@article{cauchy1847methode,
  title={M{\'e}thode g{\'e}n{\'e}rale pour la r{\'e}solution des systemes d’{\'e}quations simultan{\'e}es},
  author={Cauchy, Augustin-Louis},
  journal={Comptes rendus de l'Acad\'{e}mie des sciences},
  volume={25},
  number={1847},
  pages={536--538},
  year={1847}
}

@article{robbins1951stochastic,
  title={A stochastic approximation method},
  author={Robbins, Herbert and Monro, Sutton},
  journal={The annals of mathematical statistics},
  pages={400--407},
  year={1951},
  publisher={JSTOR}
}

@article{roux2012stochastic,
  title={A stochastic gradient method with an exponential convergence rate for finite training sets},
  author={Le Roux, Nicolas and Schmidt, Mark and Bach, Francis},
  journal={Advances in neural information processing systems},
  volume={25},
  year={2012}
}

@article{schmidt2017minimizing,
  title={Minimizing finite sums with the stochastic average gradient},
  author={Schmidt, Mark and Le Roux, Nicolas and Bach, Francis},
  journal={Mathematical Programming},
  volume={162},
  number={1},
  pages={83--112},
  year={2017},
  publisher={Springer}
}

@article{blatt2007convergent,
  title={A convergent incremental gradient method with a constant step size},
  author={Blatt, Doron and Hero, Alfred O. and Gauchman, Hillel},
  journal={SIAM Journal on Optimization},
  volume={18},
  number={1},
  pages={29--51},
  year={2007},
  publisher={SIAM}
}

@article{hofmann2015variance,
  title={Variance reduced stochastic gradient descent with neighbors},
  author={Hofmann, Thomas and Lucchi, Aurelien and Lacoste-Julien, Simon and McWilliams, Brian},
  journal={Advances in Neural Information Processing Systems},
  volume={28},
  year={2015}
}

@article{gower2021stochastic,
  title={Stochastic quasi-gradient methods: {V}ariance reduction via {J}acobian sketching},
  author={Gower, Robert M and Richt{\'a}rik, Peter and Bach, Francis},
  journal={Mathematical Programming},
  volume={188},
  pages={135--192},
  year={2021},
  publisher={Springer}
}

@article{konevcny2017semi,
  title={Semi-stochastic gradient descent methods},
  author={Kone{\v{c}}n{\`y}, Jakub and Richt{\'a}rik, Peter},
  journal={Frontiers in Applied Mathematics and Statistics},
  volume={3},
  pages={9},
  year={2017},
  publisher={Frontiers Media SA}
}

@inproceedings{li2020convergence,
  title={On the convergence of {SARAH} and beyond},
  author={Li, Bingcong and Ma, Meng and Giannakis, Georgios B},
  booktitle={International Conference on Artificial Intelligence and Statistics},
  pages={223--233},
  year={2020},
  organization={PMLR}
}

@article{shalev2013stochastic,
  title={Stochastic dual coordinate ascent methods for regularized loss minimization.},
  author={Shalev-Shwartz, Shai and Zhang, Tong},
  journal={Journal of Machine Learning Research},
  volume={14},
  number={1},
  year={2013}
}

@article{lin2014accelerated,
  title={An accelerated proximal coordinate gradient method},
  author={Lin, Qihang and Lu, Zhaosong and Xiao, Lin},
  journal={Advances in Neural Information Processing Systems},
  volume={27},
  year={2014}
}

@inproceedings{shalev2014accelerated,
  title={Accelerated proximal stochastic dual coordinate ascent for regularized loss minimization},
  author={Shalev-Shwartz, Shai and Zhang, Tong},
  booktitle={International conference on machine learning},
  pages={64--72},
  year={2014},
  organization={PMLR}
}

@inproceedings{reddi2016stochastic,
  title={Stochastic variance reduction for nonconvex optimization},
  author={Reddi, Sashank J and Hefny, Ahmed and Sra, Suvrit and Poczos, Barnabas and Smola, Alex},
  booktitle={International conference on machine learning},
  pages={314--323},
  year={2016},
  organization={PMLR}
}

@article{lin2015universal,
  title={A universal catalyst for first-order optimization},
  author={Lin, Hongzhou and Mairal, Julien and Harchaoui, Zaid},
  journal={Advances in neural information processing systems},
  volume={28},
  year={2015}
}

@inproceedings{liu2022kill,
  title={Kill a bird with two stones: {C}losing the convergence gaps in non-strongly convex optimization by directly accelerated {SVRG} with double compensation and snapshots},
  author={Liu, Yuanyuan and Shang, Fanhua and An, Weixin and Liu, Hongying and Lin, Zhouchen},
  booktitle={International Conference on Machine Learning},
  pages={14008--14035},
  year={2022},
  organization={PMLR}
}

@article{woodworth2016tight,
  title={Tight complexity bounds for optimizing composite objectives},
  author={Woodworth, Blake E and Srebro, Nati},
  journal={Advances in neural information processing systems},
  volume={29},
  year={2016}
}

@article{lan2018optimal,
  title={An optimal randomized incremental gradient method},
  author={Lan, Guanghui and Zhou, Yi},
  journal={Mathematical programming},
  volume={171},
  number={1},
  pages={167--215},
  year={2018},
  publisher={Springer}
}

@article{lan2019unified,
  title={A unified variance-reduced accelerated gradient method for convex optimization},
  author={Lan, Guanghui and Li, Zhize and Zhou, Yi},
  journal={Advances in Neural Information Processing Systems},
  volume={32},
  year={2019}
}

@inproceedings{joulani2020simpler,
  title={A simpler approach to accelerated optimization: {I}terative averaging meets optimism},
  author={Joulani, Pooria and Raj, Anant and Gyorgy, Andras and Szepesv{\'a}ri, Csaba},
  booktitle={International Conference on Machine Learning},
  pages={4984--4993},
  year={2020},
  organization={PMLR}
}

@article{li2021anita,
  title={{ANITA: A}n optimal loopless accelerated variance-reduced gradient method},
  author={Li, Zhize},
  journal={arXiv preprint arXiv:2103.11333},
  year={2021}
}

@inproceedings{allen2017natasha,
  title={Natasha: {F}aster non-convex stochastic optimization via strongly non-convex parameter},
  author={Allen-Zhu, Zeyuan},
  booktitle={International Conference on Machine Learning},
  pages={89--97},
  year={2017},
  organization={PMLR}
}

@article{fang2018spider,
  title={{SPIDER}: {N}ear-optimal non-convex optimization via stochastic path-integrated differential estimator},
  author={Fang, Cong and Li, Chris Junchi and Lin, Zhouchen and Zhang, Tong},
  journal={Advances in Neural Information Processing Systems},
  volume={31},
  year={2018}
}

@article{cutkosky2019momentum,
  title={Momentum-based variance reduction in non-convex sgd},
  author={Cutkosky, Ashok and Orabona, Francesco},
  journal={Advances in neural information processing systems},
  volume={32},
  year={2019}
}

@article{allen2017katyusha,
  title={Katyusha: {T}he first direct acceleration of stochastic gradient methods},
  author={Allen-Zhu, Zeyuan},
  journal={The Journal of Machine Learning Research},
  volume={18},
  number={1},
  pages={8194--8244},
  year={2017},
  publisher={JMLR. org}
}

@misc{ruder2017overview,
      title={An overview of gradient descent optimization algorithms},
      author={Sebastian Ruder},
      year={2017},
      eprint={1609.04747},
      archivePrefix={arXiv},
      primaryClass={cs.LG}
}

@inproceedings{latorre2020lipschitz,
title={Lipschitz constant estimation of Neural Networks via sparse polynomial optimization},
author={Fabian Latorre and Paul Rolland and Volkan Cevher},
booktitle={International Conference on Learning Representations},
year={2020}}

@article{armijo1966minimization,
  title={Minimization of functions having {L}ipschitz continuous first partial derivatives},
  author={Armijo, Larry},
  journal={Pacific Journal of mathematics},
  volume={16},
  number={1},
  pages={1--3},
  year={1966},
  publisher={Mathematical Sciences Publishers}
}

@article{nesterov2015universal,
  title={Universal gradient methods for convex optimization problems},
  author={Nesterov, Yurii},
  journal={Mathematical Programming},
  volume={152},
  number={1},
  pages={381--404},
  year={2015},
  publisher={Springer}
}

@article{lan2015bundle,
  title={Bundle-level type methods uniformly optimal for smooth and nonsmooth convex optimization},
  author={Lan, Guanghui},
  journal={Mathematical Programming},
  volume={149},
  number={1-2},
  pages={1--45},
  year={2015},
  publisher={Springer}
}

@article{barzilai1988two,
  title={Two-point step size gradient methods},
  author={Barzilai, Jonathan and Borwein, Jonathan M},
  journal={IMA journal of numerical analysis},
  volume={8},
  number={1},
  pages={141--148},
  year={1988},
  publisher={Oxford University Press}
}

@article{vrahatis2000class,
  title={A class of gradient unconstrained minimization algorithms with adaptive stepsize},
  author={Vrahatis, Michael N and Androulakis, George S and Lambrinos, John N and Magoulas, George D},
  journal={Journal of Computational and Applied Mathematics},
  volume={114},
  number={2},
  pages={367--386},
  year={2000},
  publisher={Elsevier}
}

@inproceedings{malitsky2020adaptive,
  title={Adaptive gradient descent without descent},
  author={Malitsky, Yura and Mishchenko, Konstantin},
  booktitle={Proceedings of the 37th International Conference on Machine Learning},
  pages={6702--6712},
  year={2020}
}

@article{vaswani2019painless,
  title={Painless stochastic gradient: {I}nterpolation, line-search, and convergence rates},
  author={Vaswani, Sharan and Mishkin, Aaron and Laradji, Issam and Schmidt, Mark and Gidel, Gauthier and Lacoste-Julien, Simon},
  journal={Advances in neural information processing systems},
  volume={32},
  year={2019}
}

@article{dvinskikh2019adaptive,
  title={Adaptive gradient descent for convex and non-convex stochastic optimization},
  author={Dvinskikh, Darina and Ogaltsov, Aleksandr and Gasnikov, Alexander and Dvurechensky, Pavel and Tyurin, Alexander and Spokoiny, Vladimir},
  journal={arXiv preprint arXiv:1911.08380},
  year={2019}
}

@article{tan2016barzilai,
  title={Barzilai-{B}orwein step size for stochastic gradient descent},
  author={Tan, Conghui and Ma, Shiqian and Dai, Yu-Hong and Qian, Yuqiu},
  journal={Advances in neural information processing systems},
  volume={29},
  year={2016}
}

@article{liu2019class,
  title={A class of stochastic variance reduced methods with an adaptive stepsize},
  author={Liu, Yan and Han, Congying and Guo, Tiande},
  journal={URL http://www. optimization-online. org/DB\_FILE/2019/04/7170. pdf},
  year={2019}
}

@article{li2019adaptive,
  title={Adaptive step sizes in variance reduction via regularization},
  author={Li, Bingcong and Giannakis, Georgios B},
  journal={arXiv preprint arXiv:1910.06532},
  year={2019}
}

@inproceedings{li2020almost,
  title={Almost tune-free variance reduction},
  author={Li, Bingcong and Wang, Lingda and Giannakis, Georgios B},
  booktitle={International conference on machine learning},
  pages={5969--5978},
  year={2020},
  organization={PMLR}
}

@article{yang2021accelerating,
  title={Accelerating mini-batch {SARAH} by step size rules},
  author={Yang, Zhuang and Chen, Zengping and Wang, Cheng},
  journal={Information Sciences},
  volume={558},
  pages={157--173},
  year={2021},
  publisher={Elsevier}
}

@inproceedings{mcmahan2010adaptive,
  title={Adaptive bound optimization for online convex optimization},
  author={McMahan, H Brendan and Streeter, Matthew},
  pages={244},
  year={2010},
  booktitle={Proceedings of the 23rd Conference on Learning Theory (COLT)}
}

@inproceedings{kingma2015adam,
  title={Adam: A Method for Stochastic Optimization},
  author={Kingma, Diederik P. and Ba, Jimmy},
  booktitle={International Conference on Learning Representations (ICLR)},
  year={2015}
}

@inproceedings{levy2017online,
  title={Online to offline conversions, universality and adaptive minibatch sizes},
  author={Levy, Kfir},
  booktitle={Advances in Neural Information Processing Systems},
  pages={1613--1622},
  year={2017}
}

@inproceedings{levy2018online,
  title={Online adaptive methods, universality and acceleration},
  author={Levy, Yehuda Kfir and Yurtsever, Alp and Cevher, Volkan},
  booktitle={Advances in Neural Information Processing Systems},
  pages={6500--6509},
  year={2018}
}

@article{kavis2019unixgrad,
  title={Unix{G}rad: {A} universal, adaptive algorithm with optimal guarantees for constrained optimization},
  author={Kavis, Ali and Levy, Kfir Y and Bach, Francis and Cevher, Volkan},
  journal={Advances in neural information processing systems},
  volume={32},
  year={2019}
}

@article{wu2018wngrad,
  title={{WNG}rad: {L}earn the learning rate in gradient descent},
  author={Wu, Xiaoxia and Ward, Rachel and Bottou, L{\'e}on},
  journal={arXiv preprint arXiv:1803.02865},
  year={2018}
}

@inproceedings{cutkosky2019anytime,
  title={Anytime online-to-batch, optimism and acceleration},
  author={Cutkosky, Ashok},
  booktitle={International Conference on Machine Learning},
  pages={1446--1454},
  year={2019},
  organization={PMLR}
}

@inproceedings{ene2021adaptive,
  title={Adaptive gradient methods for constrained convex optimization and variational inequalities},
  author={Ene, Alina and Nguyen, Huy L and Vladu, Adrian},
  booktitle={Proceedings of the AAAI Conference on Artificial Intelligence},
  number={8},
  pages={7314--7321},
  year={2021}
}

@inproceedings{ene2022adaptive,
  title={Adaptive and universal algorithms for variational inequalities with optimal convergence},
  author={Ene, Alina and Nguyen, Huy},
  booktitle={Proceedings of the AAAI Conference on Artificial Intelligence},
  number={6},
  pages={6559--6567},
  year={2022}
}

@article{ward2020adagrad,
  title={Ada{G}rad stepsizes: {S}harp convergence over nonconvex landscapes},
  author={Ward, Rachel and Wu, Xiaoxia and Bottou, L\'eon},
  journal={The Journal of Machine Learning Research},
  volume={21},
  number={1},
  pages={9047--9076},
  year={2020},
  publisher={JMLRORG}
}

@article{levy2021storm+,
  title={{STORM}+: {F}ully adaptive {SGD} with recursive momentum for nonconvex optimization},
  author={Levy, Kfir and Kavis, Ali and Cevher, Volkan},
  journal={Advances in Neural Information Processing Systems},
  volume={34},
  pages={20571--20582},
  year={2021}
}

@conference{kavis2022high,
title = "High probability bounds for a class of nonconvex algorithms with AdaGrad stepsize",
abstract = "In this paper, we propose a new, simplified high probability analysis of AdaGrad for smooth, non-convex problems. More specifically, we focus on a particular accelerated gradient (AGD) template (Lan, 2020), through which we recover the original AdaGrad and its variant with averaging, and prove a convergence rate of O(1/√T) with high probability without the knowledge of smoothness and variance. We use a particular version of Freedman's concentration bound for martingale difference sequences (Kakade & Tewari, 2008) which enables us to achieve the best-known dependence of log(1/δ) on the probability margin δ. We present our analysis in a modular way and obtain a complementary O(1/T) convergence rate in the deterministic setting. To the best of our knowledge, this is the first high probability result for AdaGrad with a truly adaptive scheme, i.e., completely oblivious to the knowledge of smoothness and uniform variance bound, which simultaneously has best-known dependence of log(1/δ). We further prove noise adaptation property of AdaGrad under additional noise assumptions.",
author = "Ali Kavis and Levy, {Kfir Y.} and Volkan Cevher",
note = "Publisher Copyright: {\textcopyright} 2022 ICLR 2022 - 10th International Conference on Learning Representationss.",
year = "2022",
}

@inproceedings{faw2022power,
  title={The power of adaptivity in {SGD}: {S}elf-tuning step sizes with unbounded gradients and affine variance},
  author={Faw, Matthew and Tziotis, Isidoros and Caramanis, Constantine and Mokhtari, Aryan and Shakkottai, Sanjay and Ward, Rachel},
  booktitle={Conference on Learning Theory},
  pages={313--355},
  year={2022},
  organization={PMLR}
}

@InProceedings{attia2023sgd,
  title = 	 {{SGD} with {A}da{G}rad stepsizes: full adaptivity with high probability to unknown parameters, unbounded gradients and affine variance},
  author =       {Attia, Amit and Koren, Tomer},
  booktitle = 	 {Proceedings of the 40th International Conference on Machine Learning},
  pages = 	 {1147--1171},
  year = 	 {2023},
  editor = 	 {Krause, Andreas and Brunskill, Emma and Cho, Kyunghyun and Engelhardt, Barbara and Sabato, Sivan and Scarlett, Jonathan},
  volume = 	 {202},
  series = 	 {Proceedings of Machine Learning Research},
  month = 	 {23--29 Jul},
  publisher =    {PMLR},
  pdf = 	 {https://proceedings.mlr.press/v202/attia23a/attia23a.pdf},
  url = 	 {https://proceedings.mlr.press/v202/attia23a.html},
  abstract = 	 {We study Stochastic Gradient Descent with AdaGrad stepsizes: a popular adaptive (self-tuning) method for first-order stochastic optimization. Despite being well studied, existing analyses of this method suffer from various shortcomings: they either assume some knowledge of the problem parameters, impose strong global Lipschitz conditions, or fail to give bounds that hold with high probability. We provide a comprehensive analysis of this basic method without any of these limitations, in both the convex and non-convex (smooth) cases, that additionally supports a general “affine variance” noise model and provides sharp rates of convergence in both the low-noise and high-noise regimes.}
}

@article{lemarechal1995new,
  title={New variants of bundle methods},
  author={Lemar{\'e}chal, Claude and Nemirovskii, Arkadii and Nesterov, Yurii},
  journal={Mathematical programming},
  volume={69},
  pages={111--147},
  year={1995},
  publisher={Springer}
}

@dataset{scMark,
  author       = {Javier Diaz-Mejia},
  title        = {scMARK an 'MNIST' like benchmark to evaluate and
                   optimize models for unifying scRNA data},
  year         = {2021},
  publisher    = {Zenodo},
  version      = {1.0},
  doi          = {10.5281/zenodo.5765804},
  url          = {https://doi.org/10.5281/zenodo.5765804}
}

@article{dubois2022svrg,
  title={{SVRG} meets {A}da{G}rad: {P}ainless variance reduction},
  author={Dubois-Taine, Benjamin and Vaswani, Sharan and Babanezhad, Reza and Schmidt, Mark and Lacoste-Julien, Simon},
  journal={Machine Learning},
  pages={1--51},
  year={2022},
  publisher={Springer}
}

@inproceedings{liu2022adaptive,
  title={Adaptive accelerated (extra-) gradient methods with variance reduction},
  author={Liu, Zijian and Nguyen, Ta Duy and Ene, Alina and Nguyen, Huy},
  booktitle={International Conference on Machine Learning},
  pages={13947--13994},
  year={2022},
  organization={PMLR}
}

@article{li2022variance,
  title={Variance reduction on general adaptive stochastic mirror descent},
  author={Li, Wenjie and Wang, Zhanyu and Zhang, Yichen and Cheng, Guang},
  journal={Machine Learning},
  pages={1--39},
  year={2022},
  publisher={Springer}
}

@article{wang2022divergence,
  title={Divergence results and convergence of a variance reduced Version of {ADAM}},
  author={Wang, Ruiqi and Klabjan, Diego},
  journal={arXiv preprint arXiv:2210.05607},
  year={2022}
}

@inproceedings{kavis2022adaptive,
title={Adaptive stochastic variance reduction for non-convex finite-sum minimization},
author={Ali Kavis and EFSTRATIOS PANTELEIMON SKOULAKIS and Kimon Antonakopoulos and Leello Tadesse Dadi and Volkan Cevher},
booktitle={Advances in Neural Information Processing Systems},
editor={Alice H. Oh and Alekh Agarwal and Danielle Belgrave and Kyunghyun Cho},
year={2022},
url={https://openreview.net/forum?id=k98U0cb0Ig}
}

@article{cutkosky2018distributed,
  title={Distributed stochastic optimization via adaptive SGD},
  author={Cutkosky, Ashok and Busa-Fekete, R{\'o}bert},
  journal={Advances in Neural Information Processing Systems},
  volume={31},
  year={2018}
}

@article{li2023convergence,
  title={Convergence of {A}dam under relaxed assumptions},
  author={Li, Haochuan and Jadbabaie, Ali and Rakhlin, Alexander},
  journal={arXiv preprint arXiv:2304.13972},
  year={2023}
}

@article{xie2022adaptive,
  title={An adaptive incremental gradient method with support for non-euclidean norms},
  author={Xie, Binghui and Jin, Chenhan and Zhou, Kaiwen and Cheng, James and Meng, Wei},
  journal={arXiv preprint arXiv:2205.02273},
  year={2022}
}

@article{shi2021ai,
  title={{AI-SARAH}: {A}daptive and implicit stochastic recursive gradient methods},
  author={Shi, Zheng and Sadiev, Abdurakhmon and Loizou, Nicolas and Richt{\'a}rik, Peter and Tak{\'a}{\v{c}}, Martin},
  journal={arXiv preprint arXiv:2102.09700},
  year={2021}
}

@inproceedings{gower2016stochastic,
  title={Stochastic block {BFGS}: {S}queezing more curvature out of data},
  author={Gower, Robert and Goldfarb, Donald and Richt{\'a}rik, Peter},
  booktitle={International Conference on Machine Learning},
  pages={1869--1878},
  year={2016},
  organization={PMLR}
}

@InProceedings{gorbunov2020unified,
  title = 	 {A unified theory of SGD: Variance reduction, sampling, quantization and coordinate descent},
  author =       {Gorbunov, Eduard and Hanzely, Filip and Richtarik, Peter},
  booktitle = 	 {Proceedings of the Twenty Third International Conference on Artificial Intelligence and Statistics},
  pages = 	 {680--690},
  year = 	 {2020},
  editor = 	 {Chiappa, Silvia and Calandra, Roberto},
  volume = 	 {108},
  series = 	 {Proceedings of Machine Learning Research},
  month = 	 {26--28 Aug},
  publisher =    {PMLR},
  pdf = 	 {http://proceedings.mlr.press/v108/gorbunov20a/gorbunov20a.pdf},
  url = 	 {https://proceedings.mlr.press/v108/gorbunov20a.html},
  abstract = 	 {In this paper we introduce a unified analysis of a large family of variants of proximal stochastic gradient descent (SGD) which so far have required different intuitions, convergence analyses, have different applications, and which have been developed separately in various communities. We show that our framework includes methods with and without the following tricks, and their combinations: variance reduction, importance sampling, mini-batch sampling, quantization, and coordinate sub-sampling.  As a by-product, we obtain the first unified theory of SGD and randomized coordinate descent (RCD) methods,  the first unified theory of variance reduced and non-variance-reduced SGD methods, and the first unified theory of quantized and non-quantized methods. A key to our approach is a parametric assumption on the iterates and stochastic gradients. In a single theorem we establish a linear convergence result under this assumption and strong-quasi convexity of the loss function. Whenever we recover an existing method as a special case, our theorem gives the best known complexity result. Our approach can be  used to motivate the development of new useful methods, and offers pre-proved convergence guarantees. To illustrate the strength of our approach, we develop five new variants of SGD, and through numerical experiments demonstrate some of their properties.  }
}

@InProceedings{condat2022murana,
  title = 	 {MURANA: A generic framework for stochastic variance-reduced optimization},
  author =       {Condat, Laurent and Richtarik, Peter},
  booktitle = 	 {Proceedings of Mathematical and Scientific Machine Learning},
  pages = 	 {81--96},
  year = 	 {2022},
  editor = 	 {Dong, Bin and Li, Qianxiao and Wang, Lei and Xu, Zhi-Qin John},
  volume = 	 {190},
  series = 	 {Proceedings of Machine Learning Research},
  month = 	 {15--17 Aug},
  publisher =    {PMLR},
  pdf = 	 {https://proceedings.mlr.press/v190/condat22a/condat22a.pdf},
  url = 	 {https://proceedings.mlr.press/v190/condat22a.html},
  abstract = 	 {We propose a generic variance-reduced algorithm, which we call MUltiple RANdomized Algorithm (MURANA), for minimizing a sum of several smooth functions plus a regularizer, in a sequential or distributed manner. Our method is formulated with general stochastic operators, which allow us to model various strategies for reducing the computational complexity. For example, MURANA supports sparse activation of the gradients, and also reduction of the communication load via compression of the update vectors. This versatility allows MURANA to cover many existing randomization mechanisms within a unified framework, which also makes it possible to design new methods as special cases.}
}

@misc{misc_covertype_31,
  author       = {Blackard,Jock},
  title        = {{Covertype}},
  year         = {1998},
  howpublished = {UCI Machine Learning Repository},
  note         = {{DOI}: https://doi.org/10.24432/C50K5N}
}




\end{document}